\newcommand{\g}{\frak g}
\newcommand{\e}{\frak e}
\newcommand{\n}{\frak n}
\newcommand{\pa}{\partial}
\newcommand{\m}{\frak m}
\newcommand{\F}{{\Bbb F}}
\newcommand{\cw}{{\curlywedge}}
\newcommand{\f}{\frac}
\newcommand{\lo}{\longrightarrow}
\newtheorem{theorem}{Theorem}[section]
\newtheorem{corollary}[theorem]{Corollary}
\newtheorem{lemma}[theorem]{Lemma}
\newtheorem{proposition}[theorem]{Proposition}
\theoremstyle{definition}
\newcommand{\ds}{\displaystyle}
\newtheorem{example}[theorem]{Example}
\title{\bf
Some results on the second relative homology\\
 of Leibniz algebras}
\author{Seyedeh Narges Hosseini$^{a}$ , Behrouz Edalatzadeh$^{b}$
, Ali Reza Salemkar$^{a,}$\footnote{
Corresponding author.}\\
{\small $^a$ Department of Mathematics, Faculty of Mathematical
Sciences, Shahid Beheshti University, Tehran, Iran}\\
{\small $^b$ Department of Mathematics, Faculty of Science, Razi
University, Kermanshah, Iran}\\
{\small narges.hosseini90@gmail.com~,~edalatzadeh@gmail.com~,~
salemkar@sbu.ac.ir} }
\date{ }
\begin{document}
\maketitle
\begin{abstract}
In this paper, the structure of the second relative homology and
the relative stem cover of the direct sum of two pairs of Leibniz
algebras are  determined by means of the non-abelian tensor
product of Leibniz algebras. We also characterize all pairs of
finite dimensional nilpotent Leibniz algebras such that
$\dim(\n)=n$, $\dim(\g/\n)=m$ and $\dim(HL_2(\g,\n))=n(n+2m)-3$.
\\[.2cm]
{\it Keywords:} Nilpotent Leibniz algebra, second relative homology, cover.\\
{\it Mathematics Subject Classification 2020}: 17A32.
\end{abstract}

\section{Introduction}
Leibniz algebras have been initially appeared in the papers of
Bloh \cite{B,B2,B3} as a non skew-symmetric analogue of Lie
algebras which satisfying a certain condition the so-called
Leibniz rule. They further explored by Loday \cite{Loday1,Loday}
for constructing a new (co)homology theory for Lie algebras, the
so-called Leibniz (co)homology (see also \cite{LP}).  Leibniz
algebras are  naturally applied to several areas of mathematics
and physics such as differential geometry, homological algebra,
algebraic topology, algebraic $K$-theory and  non-commutative
geometry.

Although some theoretic results in Leibniz algebras are
straightforward parallel to Lie algebras, however, there are some
complex difficulties in generalization of some topics, specially
the classification problem of Leibniz algebras, one can see some
distinguished references like \cite{barnes12,Barnes,Batten,Demir}.
Also, see \cite{Demir1,Kh} to compare the vastity of the classes
of low dimensional nilpotent non-Lie Leibniz algebras.

The homology theory of Leibniz algebras was flourished at the
beginning of the development of these algebras in a paper by Loday
and Pirashvili \cite{LP}, and immediately several concepts related
to the theory of abelian extensions tied to low dimensional
homologies, see \cite{Casas2}. From the practical standpoint to
the theory of Baer-invariants, the relative Lie-central extension,
Lie-cover and Lie-multiplier of Leibniz algebras with respect to
the subctegory of Lie algebras were studied in a series of papers,
see for instance \cite{CaKh,CaIn}. The experience of working with
Lie algebras inspired us to employ the second homology of Leibniz
algebras  (which is called the Schur multiplier) and some related
tools to achieve a new points of view to the characterization of
Leibniz algebras, notably for nilpotent ones, see
\cite{Edalatzadeh,edalatzadeh4,salemkar2}. In \cite{edalatzadeh3},
the authors introduced the second relative homology of a pair
$(\g,\n)$ as a term of a spectral sequence, here $\n$ is an ideal
of the Leibniz algebra $\g$.  We also presented the structure of
relative stem cover, a version of the Hopf's formula and a certain
upper bound for $HL_2(\g,\n)$. In addition, we classified all
pairs of finite dimensional nilpotent Leibniz algebras that have
one or two steps distance to this upper bound. Recent further
development has been carried out by Biyogman and Safa
\cite{Biyog}, with emphasis on the universal relative central
extension of a perfect pair of Leibniz algebras $(\g,\n)$ in which
$\n$ admits a complement.

This paper is devoted to derive some fruitful results about the
second relative homology of a pair of Leibniz algebras. In the
first step, as a generalization of the K\"uneth formula for the
second Leibniz homology (see \cite{LodayKuneth}), we determine the
structure of the second relative homology of the pair of Leibniz
algebras $(\g_1\oplus\g_2,\n_1\oplus \n_2)$ in terms of the second
relative homologies of the pairs $(\g_1,\n_1)$ and $(\g_2,\n_2)$.
As a consequence, for an arbitrary pair of finite dimensional
nilpotent Leibniz algebras $(\g,\n)$, we obtain an inequality for
the dimension of $HL_2(\g,\n)$ in terms of the dimension of the
commutator subalgebra and quotient algebras. This inequality
improves the result of \cite{Biyog} in the case that $\n$ admits a
complement in $\g$. Also, we develop the classification obtained
in \cite{edalatzadeh3}, where all pairs of finite dimensional
nilpotent Leibniz algebras in which have three steps distance to
the maximal possible bound for $HL_2(\g,\n)$ are determined.
Finally, we close this paper by describing the structure of the
relative stem cover of the direct sum of two pairs of Leibniz
algebras.
\vspace{.2cm}\\
{\large{\bf Notations}}. Throughout this paper, by a Leibniz
algebra we mean a right Leibniz algebra over some fixed field
$\F$.  We write $\otimes_\F$ for the usual tensor product of
vector spaces over $\F$. If ${\frak a}$ and ${\frak b}$ are
subspaces of a vector space ${\frak k}$ for which ${\frak
k}={\frak a}+{\frak b}$ and ${\frak a}\cap{\frak b}=0$, we will
write ${\frak k}={\frak a}\dot{+}{\frak b}$.
\section{The non-abelian exterior product of Leibniz algebras}
This section is devoted to the study of the properties of the
non-abelian tensor and exterior products of Leibniz algebras. We
begin by recalling these concepts.

Let $\m$ and $\g$ be two Leibniz algebras. By an action of $\g$ on
$\m$ we mean a couple of $\F$-bilinear maps $\g\times\m\to\m,
(x,m)\mapsto\hspace{-.1cm}~^xm$ and $\m\times\g\to\m$,
$(m,x)\mapsto m^x$, satisfying the axioms\vspace{-.13cm}
\begin{alignat*}{2}
&~~^{[x,x']}m =\hspace{-.1cm}~^x(~^{x'}m)
+(\hspace{-.1cm}~^xm)^{x'},
 ~~~~~~~~&&~~^x[m,m']=[\hspace{-.1cm}~^xm,m']-[\hspace{-.1cm}~^xm',m],\\[-.15cm]
&~~m^{[x,x']} = (m^x)^{x'}-(m^{x'})^x, &&~~[m,m']^x =[m^x,m'] +[m,{m'}^x],\\[-.15cm]
&~ ~^x(\hspace{-.1cm}~^{x'}m) = -\hspace{-.1cm}~^x(m^{x'}),&&~~
[m,\hspace{-.1cm}~^xm'] = -[m,{m'}^x],
\end{alignat*}
~\vspace{-.75cm}\\
for all $m,m'\in\m, x,x'\in\g$. Note that if $\g$ is a subalgebra
of some Leibniz algebra $\mathfrak{p}$ and $\m$ is an ideal in
$\mathfrak{p}$, then the Leibniz product in $\mathfrak{p}$ induces
an action of $\g$ on $\m$ given by $\hspace{-.1cm}~^xm=[x,m]$ and
$m^x=[m,x]$. In particular, there is an action of $\mathfrak{p}$
on itself given by the Leibniz product in $\mathfrak{p}$.

A {\it $($Leibniz$)$ crossed module} is a homomorphism of Leibniz
algebras $\mu:\m\to\g$ together with an action of $\g$ on $\m$
such that $\mu(\hspace{-.1cm}~^xm)=[x,\mu(m)]$,
$\mu(m^x)=[\mu(m),x]$ and$~^{\mu(m)}{m'}=[m,m']=m^{\mu(m')}$, for
all $x\in\g$, $m,m'\in\m$. Plainly, if $\m$ is an ideal of $\g$,
then the inclusion map $\m\hookrightarrow\g$ is a crossed module.

Let $\mu:\m\to\g$ and $\lambda:\n\to\g$ be two crossed modules of
Leibniz algebras. Then $\m$ and $\n$ act on each other via the
action of $\g$. The {\it non-abelian tensor product} $\m\ast\n$ is
defined in \cite{Gnedbaye} as the Leibniz algebra generated by the
symbols $m\ast n$ and $n\ast m$ ($m\in\m$, $n\in\n$), subject to
the relations\vspace{-.2cm}
\begin{alignat*}{2}
&(1a)~c(m\ast n)=cm\ast n = m\ast cn, &&(1b)~c(n\ast m) = cn\ast m = n\ast cm,\\[-.2cm]
&(2a)~(m+m')\ast n=m\ast n+m'\ast n,~~~~~~~~~~~&& (2b)~(n+n')\ast m = n\ast m+n'\ast m,\\[-.2cm]
&(2c)~m\ast(n +n')=m\ast n+m\ast n',&&(2d)~ n\ast (m+m') = n\ast m+n\ast m',\\[-.2cm]
&(3a)~m\ast[n,n']=m^n\ast n' -m^{n'}\ast n,&&(3b)~n\ast[m,m'] = n^m\ast m' -n^{m'}\ast m,\\[-.2cm]
&(3c)~[m,m']\ast n=~^mn\ast m' -m\ast n^{m'},&&(3d)~[n,n']\ast m = ~^nm\ast n' -n\ast m^{n'},\\[-.2cm]
&(4a)~m\ast~^{m'}n=-m\ast n^{m'},&& (4b)~n\ast~^{n'}m=-n\ast m^{n'},\\[-.2cm]
&(5a)~m^n\ast~^{m'}n'= [m\ast n,m'\ast n'] = ~^mn\ast m'^{n'},~&&
(5b)~^nm\ast n'^{m'}=[n\ast m,n'\ast m']=n^m\ast~^{n'}m',\\[-.2cm]
&(5c)~m^n\ast n'^{m'}=[m\ast n,n'\ast
m']=^mn\ast\hspace{-.cm}~^{n'}m',&& (5d)\hspace{-.cm}~^nm\ast
~^{m'}n'=[n\ast m,m'\ast n']=n^m\ast m'^{n'},
\end{alignat*}
~\vspace{-.75cm}\\for all $c\in\F$, $m,m'\in\m$, $n,n'\in \n$.
Note that the identity map $id_\g:\g\lo\g$ is a crossed module, so
we can form the tensor products $\g\ast\m$, $\g\ast\n$ and
$\g\ast\g$.

Let $\m\square\n$ be the vector subspace of $\m\ast\n$ spanned by
the elements $m\ast n'-n\ast m'$ with $\mu(m)=\lambda(n)$ and
$\mu(m')=\lambda(n')$. One easily gets that $\m\square\n$ lies in
the center of $\m\ast\n$. The {\it non-abelian exterior product}
$\m\curlywedge\n$ is defined to be the quotient
$(\m\ast\n)/(\m\square\n)$. We write $m\curlywedge n$ and
$n\curlywedge m$ to denote the images in $\m\curlywedge\n$ of the
generators $m\ast n$ and $n\ast m$, respectively.

The following results give some information and properties of the
above notions that will be needed.
\begin{lemma}[{\rm\cite{Donadze}}]\label{lem2.1}
Let $\g$ be a Leibniz algebra with ideals $\n$ and $\mathfrak{k}$.

$(i)$ If $\n$ and $\mathfrak{k}$ act trivially on each other, then
$\n\ast\mathfrak{k}\cong
\n^{ab}\ast\mathfrak{k}^{ab}\cong(\n^{ab}\otimes_{\F}\mathfrak{k}^{ab})
\oplus(\mathfrak{k}^{ab}\otimes_{\F}\n^{ab})$, where
$\n^{ab}=\n/\n^2$ and
$\mathfrak{k}^{ab}=\mathfrak{k}/\mathfrak{k}^2$.

$(ii)$ If $\mathfrak{k}\subseteq\n$, then there is the following
natural exact sequence of Leibniz algebras\vspace{-.2cm}
\[\g\curlywedge\mathfrak{k}\stackrel{\alpha}\lo\g\curlywedge\n
\twoheadrightarrow\f{\g}{\mathfrak{k}}\curlywedge\f{\n}{\mathfrak{k}}.\vspace{-.2cm}\]
Furthermore, if $\g=\n$ and $\mathfrak{k}$ has an ideal complement
in $\g$, then $\alpha$ is injective.
\end{lemma}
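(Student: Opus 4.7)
The plan is to verify each statement directly from the defining relations of the non-abelian tensor and exterior products. Since the result is attributed to Donadze, the main work is to check that the universal properties yield well-defined maps in both directions.

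For part $(i)$, the key observation is that trivial mutual actions force relations (3a)--(5d) to simplify dramatically. Under the convention $\hspace{-.1cm}~^nk = 0 = k^n$ and $\hspace{-.1cm}~^kn = 0 = n^k$ for $n\in\n$ and $k\in\mathfrak{k}$, the right-hand sides of (3a)--(3d) collapse to the relations $[n,n']\ast k = 0$, $k\ast[n,n']=0$, and analogous statements swapping $\n$ and $\mathfrak{k}$, while (4a)--(4b) become trivial and (5a)--(5d) force all brackets in $\n\ast\mathfrak{k}$ to vanish. Consequently each generator $n\ast k$ is bilinear in the abelianizations and factors through $\n^{ab}\otimes_\F\mathfrak{k}^{ab}$; likewise $k\ast n$ factors through $\mathfrak{k}^{ab}\otimes_\F\n^{ab}$. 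This gives a surjection from $(\n^{ab}\otimes_\F\mathfrak{k}^{ab})\oplus(\mathfrak{k}^{ab}\otimes_\F\n^{ab})$ onto $\n\ast\mathfrak{k}$; the reverse map is constructed by the universal property of the tensor product, checking that every defining relation of $\n\ast\mathfrak{k}$ maps to zero (routine in the trivial-action regime). The identification with $\n^{ab}\ast\mathfrak{k}^{ab}$ follows by the same argument applied to the abelianized algebras, where the actions remain trivial.

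For part $(ii)$, the surjection $\g\curlywedge\n\twoheadrightarrow(\g/\mathfrak{k})\curlywedge(\n/\mathfrak{k})$ is induced by $g\curlywedge n\mapsto(g+\mathfrak{k})\curlywedge(n+\mathfrak{k})$, well-defined via the universal property once one observes that the induced actions on the quotients are compatible with every relation. Exactness at $\g\curlywedge\n$ reduces to the fact that the image of $\alpha$, generated by expressions of the form $g\curlywedge k$ and $k\curlywedge g$ with $k\in\mathfrak{k}$, is exactly the kernel: any generator of $\g\curlywedge\n$ that projects to zero already contains a factor from $\mathfrak{k}$ up to the defining relations.

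For the final claim, write $\g=\mathfrak{k}\dot{+}\mathfrak{l}$ with $\mathfrak{l}$ an ideal complement. The strategy is to construct a retraction $\pi:\g\curlywedge\g\to\g\curlywedge\mathfrak{k}$ for $\alpha$. Using the direct sum, decompose the second coordinate of each generator uniquely as $k'+l'$ and send $g\curlywedge(k'+l')\mapsto g\curlywedge k'$, symmetrically on the reversed generators; then $\pi\circ\alpha=\mathrm{id}$, forcing $\alpha$ injective. The main obstacle will be checking that $\pi$ descends to the exterior product: one must show that each defining relation of $\g\curlywedge\g$ remains valid after discarding the $\mathfrak{l}$-component of one factor. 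This hinges on $\mathfrak{l}$ being a two-sided ideal, so that brackets $[k,l]$, $[l,k]$ and the corresponding actions stay inside $\mathfrak{l}$ and are therefore discarded consistently across both sides of every relation — a somewhat delicate but purely mechanical verification.
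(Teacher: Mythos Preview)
The paper does not prove this lemma; it is quoted from \cite{Donadze}. Your arguments for part $(i)$ and for exactness in part $(ii)$ follow the standard line and are fine. The gap is in your injectivity argument for $\alpha$.

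Your proposed retraction $\pi$ does not descend from $\g\ast\g$ to $\g\curlywedge\g$. The obstacle is \emph{not} the tensor-product relations (1a)--(5d): those are indeed preserved under ``discard the $\mathfrak{l}$-component of the $\n$-factor'', essentially because $\mathfrak{k}$ and $\mathfrak{l}$, being complementary ideals, act trivially on each other. The problem is the extra $\square$-relation that defines $\curlywedge$ from $\ast$. In $\g\curlywedge\g$ (where $\mu=\lambda=\mathrm{id}_\g$) the two formal generators $g\ast g'$ of type ``$m\ast n$'' and of type ``$n\ast m$'' are identified, so $\pi$ must assign them the same image. Writing $g=k+l$ and $g'=k'+l'$, your rule sends the first to $g\curlywedge k'$ (type~1 in $\g\curlywedge\mathfrak{k}$) and the second to $k\curlywedge g'$ (type~2). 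Their difference in $\g\curlywedge\mathfrak{k}$, after cancelling the $\g\square\mathfrak{k}$-term $k\curlywedge k'-k\curlywedge k'$, is
\[
l\curlywedge k'\;-\;k\curlywedge l',
\]
which by part $(i)$ lives in $(\mathfrak{l}^{ab}\otimes_\F\mathfrak{k}^{ab})\oplus(\mathfrak{k}^{ab}\otimes_\F\mathfrak{l}^{ab})$ and is generically nonzero. Hence $\pi$ is not well defined on $\g\curlywedge\g$, and in particular cannot serve as a left inverse to $\alpha$.

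A working approach is to build an explicit isomorphism for $(\mathfrak{k}\oplus\mathfrak{l})\curlywedge(\mathfrak{k}\oplus\mathfrak{l})$ that separates the four ``blocks'' simultaneously (in the spirit of Proposition~\ref{prop2.3} and Corollary~\ref{coro2.6} of the present paper), and then read off that the canonical map from $\g\curlywedge\mathfrak{k}$ identifies it with the sum of the three blocks involving $\mathfrak{k}$. A one-sided projection of a single factor cannot do this, precisely because the mixed pieces $l\curlywedge k'$ and $k\curlywedge l'$ both belong to $\g\curlywedge\mathfrak{k}$ and must be retained, not discarded.
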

\begin{proposition}[{\rm\cite{salemkar}}]
Let $\g$ be any Lie algebra. Then there is an isomorphism of Lie
algebras $\beta:\g\curlywedge\g\stackrel{\cong}\lo\g\otimes\g$
defined on generators by $\beta(x_1\curlywedge x_2)=x_1\otimes
x_2$, where $\otimes$ denotes the non-abelian tensor product of
Lie algebras, introduced in {\rm \cite{Ellis}}.
\end{proposition}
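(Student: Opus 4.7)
The plan is to construct $\beta$ and an explicit inverse $\gamma$, and to verify each is well-defined by checking the defining relations term by term. Since $\g$ is assumed to be a Lie algebra, the self-action of $\g$ is antisymmetric ($\,^x y=[x,y]=-[y,x]=-y^x$), and this collapses the two generator types appearing in the Leibniz non-abelian tensor product onto Ellis's single generator type.

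First I would define a preliminary map $\tilde\beta : \g\ast\g\to\g\otimes\g$ (the right-hand tensor product in the Lie sense) on generators by sending both types of symbols $x_1\ast x_2$ and $x_2\ast x_1$ to $x_1\otimes x_2$ and $x_2\otimes x_1$, respectively. To see that $\tilde\beta$ is well-defined, I would verify that each of the Leibniz-tensor relations (1a)--(5d) is a consequence of the Ellis Lie-tensor relations together with Lie antisymmetry in $\g$. For instance, (3a) and (3c) become equivalent relations after substituting $\,^x y=-y^x$, and both translate into the single bracket-compatibility relation of $\g\otimes\g$; relations (4a)--(4b) become vacuous in the Lie case; relations (5a)--(5d) all reduce to the single compatibility identity between the bracket of the tensor product and the action. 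Then I would observe that generators of $\g\square\g$ take the form $m\ast n'-n\ast m'$ with $\mu(m)=\lambda(n)$, $\mu(m')=\lambda(n')$, which in the setting $\mu=\lambda=\mathrm{id}_{\g}$ force $m=n$ and $m'=n'$; hence the two type-1 and type-2 symbols already map to the same element of $\g\otimes\g$, and $\tilde\beta$ factors through a Leibniz algebra homomorphism $\beta:\g\cw\g\to\g\otimes\g$.

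Next I would build the inverse $\gamma:\g\otimes\g\to\g\cw\g$ by setting $\gamma(x_1\otimes x_2)=x_1\cw x_2$. The well-definedness of $\gamma$ amounts to checking that each of Ellis's defining relations for the Lie non-abelian tensor product holds in $\g\cw\g$. These follow from the Leibniz relations (3a)--(5d) after reading them in the Lie case and identifying the two generator types $x\cw y$ (type 1) and $y\cw x$ (type 2) via the quotient by $\g\square\g$, again using the antisymmetry of $[\cdot,\cdot]$ in $\g$. Finally, the compositions $\beta\circ\gamma$ and $\gamma\circ\beta$ fix each of the generators $x_1\otimes x_2$ and $x_1\cw x_2$, so they are mutual inverses.

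The main obstacle is the bookkeeping in Step~1: the Leibniz tensor product $\g\ast\g$ has two distinct families of generators with elaborate interaction rules (1a)--(5d), and one must check systematically that, modulo the $\square$-quotient and in the presence of Lie antisymmetry, every such relation collapses onto a relation that already holds in the Ellis Lie-tensor product. Once this verification is completed, the construction of $\gamma$ and the identity computations $\beta\gamma=\mathrm{id}$, $\gamma\beta=\mathrm{id}$ are essentially formal.
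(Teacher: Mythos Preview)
The paper does not supply its own proof of this proposition; it is simply quoted from \cite{salemkar}. Your approach is correct and is the natural one (and almost certainly the one used in the cited reference): construct mutually inverse homomorphisms on generators and discharge the defining relations. Your analysis of the key points is accurate---with $\mu=\lambda=\mathrm{id}_\g$ the generators of $\g\square\g$ are exactly $m\ast n'-m\ast n'$ with the two $\ast$'s of opposite type, so the quotient $\g\curlywedge\g$ identifies the two generator families; and the Lie antisymmetry ${}^xy=-y^x$ then collapses each of (1a)--(5d) onto one of Ellis's relations ((4a)--(4b) become tautologies, and (5a)--(5d) reduce to the single bracket identity in $\g\otimes\g$). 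Nothing conceptual is missing; only the routine bookkeeping you flag remains.
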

\begin{proposition}\label{prop2.3}
Let $(\mathfrak{g}_1,\mathfrak{n}_1)$ and
$(\mathfrak{g}_2,\mathfrak{n}_2)$ be arbitrary pairs of Leibniz
algebras. Then\vspace{-.1cm}
\[(\mathfrak{g}_1\oplus\mathfrak{g}_2)\curlywedge(\mathfrak{n}_1\oplus\mathfrak{n}_2)
\cong(\mathfrak{g}_1\curlywedge\mathfrak{n}_1)\oplus
(\mathfrak{g}_2\curlywedge\mathfrak{n}_2)\oplus\ds
\f{(\overline{\mathfrak{n}_1}\ast\overline{\mathfrak{g}_2})\oplus
(\overline{\mathfrak{g}_1}\ast\overline{\mathfrak{n}_2})}{\mathfrak{a}}.\vspace{-.2cm}\]
where
$\overline{\mathfrak{n}_i}=\mathfrak{n}_i/[\mathfrak{g}_i,\mathfrak{n}_i]$
and $\overline{\mathfrak{g}_i}=\mathfrak{g}_i/[\mathfrak{g}_i
,\mathfrak{g}_i]$ for $i=1,2$, and $\mathfrak{a}$ is the
subalgebra generated by the elements $(\bar n_1\ast\bar n_2,-\bar
n_1\ast\bar n_2)$ and $(\bar n_2\ast\bar n_1,-\bar n_2\ast\bar
n_1)$ in which $n_1\in\mathfrak{n}_1$, $n_2\in\mathfrak{n}_2$
$($here the bar $\bar.$ denotes the equivalence class in each
case$)$.
\end{proposition}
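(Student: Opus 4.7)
My approach is to construct mutually inverse maps between the two sides, built directly from the bilinearity relations. The crucial structural observation is that in $\g_1\oplus\g_2$ the summands mutually commute, so each pair among $\g_1,\g_2,\n_1,\n_2$ in which the two factors come from different summands acts trivially on the other; this is the mechanism that forces the cross contributions to factor through the abelianizations $\overline{\g_i}$ and $\overline{\n_i}$ and eventually produces the quotient by $\mathfrak{a}$.

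First I would use the bilinearity relations $(2a)$--$(2d)$ to decompose a typical generator $(x_1,x_2)\curlywedge(y_1,y_2)$ (with $x_i\in\g_i$, $y_i\in\n_i$) as the four-term sum $x_1\curlywedge y_1 + x_2\curlywedge y_2 + x_1\curlywedge y_2 + x_2\curlywedge y_1$, interpreted inside $(\g_1\oplus\g_2)\curlywedge(\n_1\oplus\n_2)$. Combining the relations $(3a)$--$(3d)$ with the vanishing of the cross action, the two diagonal terms realize copies of $\g_1\curlywedge\n_1$ and $\g_2\curlywedge\n_2$, while each cross term $x_1\curlywedge y_2$ vanishes whenever $x_1\in[\g_1,\g_1]$ or $y_2\in[\g_2,\n_2]$ (and symmetrically for $x_2\curlywedge y_1$). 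This gives natural maps $\overline{\g_1}\ast\overline{\n_2}\to (\g_1\oplus\g_2)\curlywedge(\n_1\oplus\n_2)$ and $\overline{\n_1}\ast\overline{\g_2}\to (\g_1\oplus\g_2)\curlywedge(\n_1\oplus\n_2)$, where Lemma~\ref{lem2.1}$(i)$ identifies the sources with abelian tensor products.

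Assembling these, I would define a map $\Phi$ from the right-hand side to $(\g_1\oplus\g_2)\curlywedge(\n_1\oplus\n_2)$ using the inclusions on the diagonal summands and the cross maps above on the off-diagonal summands. Well-definedness modulo $\mathfrak{a}$ follows from the exterior relation $m\curlywedge n'=n\curlywedge m'$ (valid when $\mu(m)=\lambda(n)$ and $\mu(m')=\lambda(n')$) applied to $m=n=(n_1,0)$ and $m'=n'=(0,n_2)$: the two exterior generators $(n_1,0)\curlywedge(0,n_2)$ and $(0,n_2)\curlywedge(n_1,0)$ coincide in the target, which is precisely the identification encoded by the generators $(\bar n_1\ast\bar n_2,-\bar n_1\ast\bar n_2)$ and $(\bar n_2\ast\bar n_1,-\bar n_2\ast\bar n_1)$ of $\mathfrak{a}$. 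Conversely I would define $\Psi$ by sending $(x_1,x_2)\curlywedge(y_1,y_2)$ to the image of the four-term decomposition in the right-hand side, verifying well-definedness by a term-by-term check of relations $(1)$--$(5)$ and of the exterior identification.

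The main obstacle will be the bookkeeping in relations $(3)$--$(5)$, since a bracket inside $\g_1\oplus\g_2$ or $\n_1\oplus\n_2$ can in principle mix the two summands and produce cross terms that must be traced back to their correct homes. At every step the simplification is the triviality of the cross actions, which collapses would-be mixed contributions to actions on abelianizations; once this is organized, the verification that $\Phi\circ\Psi$ and $\Psi\circ\Phi$ are the identity on generators reduces to routine matching, and the only nontrivial identifications among cross generators are exactly those recorded by $\mathfrak{a}$.
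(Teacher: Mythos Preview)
Your proposal is correct and follows essentially the same approach as the paper: both construct mutually inverse homomorphisms between the two sides, with your $\Phi$ corresponding to the paper's map $\psi=\langle\psi_1,\psi_2,\bar\eta\rangle$ and your $\Psi$ to the paper's $\varphi$. The paper carries out exactly the verifications you outline---the factoring of the cross maps through $\overline{\n_i},\overline{\g_i}$ via relations $(3a)$--$(3d)$ and the trivial cross action, the vanishing of $\mathfrak a$ under $\eta$ via the exterior identification, and a sample check of the defining relations for $\varphi$---so your plan matches the paper's argument in both strategy and detail.
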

\begin{proof}
Define the map
$\varphi:(\mathfrak{g}_1\oplus\mathfrak{g}_2)\curlywedge(\mathfrak{n}_1\oplus\mathfrak{n}_2)
\lo(\mathfrak{g}_1\curlywedge\mathfrak{n}_1)\oplus
(\mathfrak{g}_2\curlywedge\mathfrak{n}_2)\oplus\ds
\f{(\overline{\mathfrak{n}_1}\ast\overline{\mathfrak{g}_2})\oplus
(\overline{\mathfrak{g}_1}\ast\overline{\mathfrak{n}_2})}{\mathfrak{a}}$
on generators as follows:\vspace{-.26cm}
\[\hspace{.3cm}\varphi((x_1,x_2)\curlywedge(n_1,n_2))=(x_1\curlywedge n_1,x_2\curlywedge n_2,
(\bar n_1\ast\bar x_2,\bar n_2\ast\bar
x_1)+\mathfrak{a}),\vspace{-.3cm}\]
\[\hspace{.3cm}\varphi((n_1,n_2)\curlywedge(x_1,x_2))=(n_1\curlywedge x_1,n_2\curlywedge x_2,
(\bar x_2\ast\bar n_1,\bar x_1\ast\bar
n_2)+\mathfrak{a}),\vspace{-.15cm}\]for all $x_i\in\g_i$,
$n_i\in\n_i$, $i=1,2$. It is not difficult to verify that
$\varphi$ is well-defined and preserves the defining relations of
the exterior product. For instance, we indicate that\vspace{-.2cm}
\begin{equation}
\varphi([(x_1,x_2),(x_1',x_2')]\curlywedge(n_1,n_2))=
\varphi(\hspace{-.1cm}~^{(x_1,x_2)}(n_1,n_2)
\curlywedge(x_1',x_2'))-\varphi((x_1,x_2)\curlywedge(n_1,n_2)^{(x_1',x_2')}).\vspace{-.15cm}
\end{equation}
We have\vspace{-.4cm}
\begin{alignat*}{1}
\varphi([(x_1,x_2),(x_1',x_2')]\curlywedge(n_1,n_2))&=
\varphi(([x_1,x_1'],[x_2,x_2'])\curlywedge(n_1,n_2))\\
&=([x_1,x_1']\curlywedge n_1,[x_2,x_2']\curlywedge n_2,(\bar
n_1\ast\overline{[x_2,x_2']},\bar
n_2\ast\overline{[x_1,x_1']})+\mathfrak{a})\\
&=([x_1,x_1']\curlywedge n_1,[x_2,x_2']\curlywedge
n_2,\mathfrak{a}).
\end{alignat*}
~\vspace{-1cm}\\ On the other hand,\vspace{-.2cm}
\begin{alignat*}{1}
\varphi(&\hspace{-.1cm}~^{(x_1,x_2)}(n_1,n_2)
\curlywedge(x_1',x_2'))-\varphi((x_1,x_2)\curlywedge(n_1,n_2)^{(x_1',x_2')})\\
=&(\hspace{-.1cm}~^{x_1}n_1\curlywedge x_1'-x_1\curlywedge
n_1^{x_1'},\hspace{-.1cm}~^{x_2}n_2\curlywedge x_2'-x_2\curlywedge
n_2^{x_2'},
(\overline{x_2'}\ast\overline{\hspace{-.1cm}~^{x_1}n_1}-\overline{n_1^{x_1'}}\ast
\overline{x_2},\overline{x_1'}\ast\overline{\hspace{-.1cm}~^{x_2}n_2}
-\overline{n_2^{x_2'}}\ast\overline{x_1})+\mathfrak{a})\\
=&([x_1,x_1']\curlywedge n_1,[x_2,x_2']\curlywedge
n_2,\mathfrak{a}),
\end{alignat*}
~\vspace{-.8cm}\\because $\hspace{-.1cm}~^{x_1}n_x,
n_1^{x_1'}\in[\mathfrak{g}_1,\mathfrak{n}_1]$ and
$\hspace{-.1cm}~^{x_2}n_2,
n_2^{x_2'}\in[\mathfrak{g}_2,\mathfrak{n}_2]$. Then the equality
$(1)$ holds. We now prove that $\varphi$ is an isomorphism by
giving an inverse for it. To do this, let us first define the
maps\vspace{-.15cm}
\[\eta_1:\overline{\mathfrak{n}_1}\ast\overline{\mathfrak
    {g}_2}\lo
(\mathfrak{g_1}\oplus\mathfrak{g}_2)\curlywedge(\mathfrak{n}_1\oplus\mathfrak{n}_2)
~~~~~~{\rm and}~~~~~~
\eta_2:\overline{\mathfrak{g}_1}\ast\overline{\mathfrak{n}_2}\lo
(\mathfrak{g}_1\oplus\mathfrak{g}_2)\curlywedge
(\mathfrak{n}_1\oplus\mathfrak{n}_2)\vspace{-.15cm}\] by
$\eta_1(\bar n_1\ast\bar x_2)=(0,x_2)\curlywedge(n_1,0)$,
$\eta_1(\bar x_2\ast\bar n_1)=(n_1,0)\curlywedge(0,x_2)$ and
$\eta_2(\bar x_1\ast\bar n_2)=(0,n_2)\curlywedge(x_1,0)$,
$\eta_2(\bar n_2\ast\bar x_1)=(l_1,0)\curlywedge(0,n_2)$. For
$n_1,n_1'\in\mathfrak{n}_1$, $x_2,x_2'\in\mathfrak{g}_2$,
$y\in[\mathfrak{g}_1,\mathfrak{n}_1]$,
$z\in[\mathfrak{g}_2,\mathfrak{g}_2]$, if $n_1=n_1'+y$ and
$x_2=x_2'+z$, then we have\vspace{-.2cm}
\[\eta_1(\bar n_1\ast\bar x_2)=(0,x_2'+z)\curlywedge(n_1'+y,0)
=(0,x_2')\curlywedge(n_1',0)+(0,x_2')\curlywedge(y,0)
+(0,z)\curlywedge(n_1'+y,0).\vspace{-.14cm}\]But, assuming
$y=[a,b]$ and $z=[c,d]$ for some $a\in\mathfrak{g}_1$,
$b\in\mathfrak{n}_1$, $c,d\in\mathfrak{g}_2$, we have\\
~\hspace{.4cm}$(0,x_2')\curlywedge(y,0)=(0,x_2')\curlywedge([a,b],0)=(0,x_2')^{(a,0)}
\curlywedge(b,0)-(0,x_2')^{(b,0)}\curlywedge(a,0)=0$,\\
~\hspace{.4cm}$(0,z)\curlywedge(n_1'+y,0)=(0,[c,d])\curlywedge(n_1'+y,0)=
\hspace{-.1cm}~^{(0,c)}(n_1'+y,0)\curlywedge(0,d)-(0,c)\curlywedge
(n_1'+y,0)^{(0,d)}=0$,\\
since $\mathfrak{g}_1\oplus 0$ and $0\oplus\mathfrak{g}_2$ act
trivially on each other. Consequently, $\eta_1(\bar n_1\ast\bar
x_2)=\eta_1(\bar n_1'\ast\bar x_2')$ and, by an analogous
argument, $\eta_1(\bar x_2\ast\bar n_1)=\eta_1(\bar x_2'\ast\bar
n_1')$. Therefore, $\eta_1$ and similarly, $\eta_2$ are
well-defined. It is readily checked that $\eta_1$ and $\eta_2$ are
Leibniz homomorphisms whose images are the abelian subalgebras of
$(\mathfrak{g}_1\oplus\mathfrak{g}_2)\curlywedge(\mathfrak{n}_1\oplus\mathfrak{n}_2)$.
Hence, we can obtain a Leibniz homomorphism
$\eta:(\overline{\mathfrak{n}_1}\ast\overline{\mathfrak{g}_2})\oplus
(\overline{\mathfrak{g}_1}\ast\overline{\mathfrak{n}_2})\lo
(\mathfrak{g}_1\oplus\mathfrak{g}_2)\curlywedge(\mathfrak{n}_1\oplus\mathfrak{n}_2)$
defined by $\eta(v,w)=\eta_1(v)+\eta_2(w)$. As $\mathfrak{a}$ is
annihilated by $\eta$, this gives rise to a Leibniz
homomorphism\vspace{-.2cm}
\[\bar\eta:\f{(\overline{\mathfrak{n}_1}\ast\overline{\mathfrak{g}_2})\oplus
(\overline{\mathfrak{g}_1}\ast\overline{\mathfrak{n}_2})}{\mathfrak{a}}
\lo(\mathfrak{g}_1\oplus\mathfrak{g}_2)\curlywedge(\mathfrak{n}_1
\oplus\mathfrak{n}_2).\vspace{-.15cm}\] Now, if we consider the
canonical homomorphisms\vspace{-.2cm}
\begin{alignat*}{2}
\psi_1:\mathfrak{g}_1\curlywedge\mathfrak{n}_1&\lo
(\mathfrak{g}_1\oplus\mathfrak{g}_2)\curlywedge(\mathfrak{n}_1
\oplus\mathfrak{n}_2),~~~x_1\curlywedge
n_1\longmapsto(x_1,0)\curlywedge(n_1,0),~~~n_1\curlywedge
x_1\longmapsto(n_1,0)\curlywedge(x_1,0),\\
\psi_2:\mathfrak{g}_2\curlywedge\mathfrak{n}_2&\lo
(\mathfrak{g}_1\oplus\mathfrak{g}_2)\curlywedge(\mathfrak{n}_1
\oplus\mathfrak{n}_2),~~~x_2\curlywedge
n_2\longmapsto(0,x_2)\curlywedge(0,n_2),~~~n_2\curlywedge
x_2\longmapsto(0,n_2)\curlywedge(0,x_2),\vspace{-.2cm}
\end{alignat*}
then the homomorphism $\psi=\langle\psi_1,\psi_2,\bar\eta\rangle$
in the coproduct of vector spaces\vspace{-.15cm}
\[\psi:(\mathfrak{g}_1\curlywedge\mathfrak{n}_1)\oplus
(\mathfrak{g}_2\curlywedge\mathfrak{n}_2)\oplus\ds
\f{(\overline{\mathfrak{n}_1}\ast\overline{\mathfrak{g}_2})\oplus
(\overline{\mathfrak{g}_1}\ast\overline{\mathfrak{n}_2})}{\mathfrak{a}}
\lo(\mathfrak{g}_1\oplus\mathfrak{g}_2)\curlywedge(\mathfrak{n}_1
\oplus\mathfrak{n}_2),\vspace{-.15cm}\] is evidently an inverse
for $\varphi$. This completes the proof.
\end{proof}
Proposition 2.3 provides a description for the exterior product of
the direct sum of Leibniz algebras.
\begin{corollary}\label{coro2.6}
For any two Leibniz algebras $\g_1$ and $\g_2$, there is a Leibniz
isomorphism\vspace{-.2cm}
\[(\g_1\oplus\g_2)\curlywedge(\g_1\oplus\g_2)
\cong(\g_1\curlywedge\g_1)\oplus
(\g_2\curlywedge\g_2)\oplus(\g_1^{ab}\ast\g_2^{ab}).\vspace{-.2cm}\]
\end{corollary}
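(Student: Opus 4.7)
The plan is to derive this statement as an immediate specialization of Proposition \ref{prop2.3} to the case $\n_1=\g_1$ and $\n_2=\g_2$, followed by a small identification of the remaining quotient. With these choices one has $[\g_i,\n_i]=[\g_i,\g_i]$, so both $\overline{\n_i}$ and $\overline{\g_i}$ collapse to the abelianization $\g_i^{ab}$. Consequently the two tensor factors $\overline{\n_1}\ast\overline{\g_2}$ and $\overline{\g_1}\ast\overline{\n_2}$ each become a copy of $\g_1^{ab}\ast\g_2^{ab}$, and Proposition \ref{prop2.3} yields
\[(\g_1\oplus\g_2)\curlywedge(\g_1\oplus\g_2)\cong(\g_1\curlywedge\g_1)\oplus(\g_2\curlywedge\g_2)\oplus\frac{(\g_1^{ab}\ast\g_2^{ab})\oplus(\g_1^{ab}\ast\g_2^{ab})}{\mathfrak{a}}.\]

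It then suffices to identify the last summand with a single copy of $\g_1^{ab}\ast\g_2^{ab}$. In this specialization the elements $\bar n_1\ast\bar n_2$ and $\bar n_2\ast\bar n_1$ appearing in the definition of $\mathfrak{a}$ together exhaust the generating symbols of $\g_1^{ab}\ast\g_2^{ab}$, so $\mathfrak{a}$ is the full antidiagonal $\{(v,-v):v\in\g_1^{ab}\ast\g_2^{ab}\}$. The sum map $(v_1,v_2)\mapsto v_1+v_2$ then induces the desired Leibniz isomorphism from the quotient onto $\g_1^{ab}\ast\g_2^{ab}$, and combining this with the displayed decomposition gives the corollary.

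The argument is essentially routine once Proposition \ref{prop2.3} is in hand; the only step requiring a moment's care is the verification that $\mathfrak{a}$ equals the entire antidiagonal rather than a proper subspace of it. This in turn reduces to observing that every element of $\g_1^{ab}\ast\g_2^{ab}$ can be written as a sum of generators of the two shapes $\bar x_1\ast\bar x_2$ and $\bar x_2\ast\bar x_1$ with $x_i\in\g_i$, which is immediate from the defining presentation of the non-abelian tensor product recalled at the start of Section 2.
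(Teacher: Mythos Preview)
Your proof is correct and follows essentially the same approach as the paper's own proof, which consists of the single observation that in this special case $\mathfrak{a}=\langle (a,-a)\mid a\in\g_1^{ab}\ast\g_2^{ab}\rangle$. You have simply spelled out in more detail why $\mathfrak{a}$ equals the full antidiagonal and why the resulting quotient is a single copy of $\g_1^{ab}\ast\g_2^{ab}$.
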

\begin{proof}
It suffices to note that, in this case, $\mathfrak{a}=\langle
(a,-a)~|~ a\in\g_1^{ab}\ast\g_2^{ab}\rangle$.
\end{proof}
Combining the above corollary with Lemma 2.1$(i)$ and Proposition
2.2, we obtain the following important result, which was already
proved in Ellis (1991) using another technique.
\begin{corollary}\label{coro2.7}
For any two Lie algebras $\g_1$ and $\g_2$, there is a Lie
isomorphism \vspace{-.2cm}
\[(\g_1\oplus\g_2)\otimes(\g_1\oplus\g_2)
\cong(\g_1\otimes\g_1)\oplus
(\g_2\otimes\g_2)\oplus(\g_1^{ab}\otimes_{\F}\g_2^{ab})
\oplus(\g_2^{ab}\otimes_{\F}\g_1^{ab}).\vspace{-.2cm}\]
\end{corollary}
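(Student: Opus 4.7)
The plan is to read off Corollary \ref{coro2.7} directly by chaining the three preceding results. Specifically, the strategy is to pass from the ordinary Lie tensor product on the left-hand side to the Leibniz exterior product (via Proposition 2.2 applied to $\g_1\oplus\g_2$), invoke the Leibniz decomposition given by Corollary \ref{coro2.6}, and then translate each summand back to the Lie setting using Proposition 2.2 again and Lemma \ref{lem2.1}$(i)$.

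First I would apply Proposition 2.2 with $\g=\g_1\oplus\g_2$, which is a Lie algebra whenever both $\g_i$ are, giving a Lie isomorphism $(\g_1\oplus\g_2)\otimes(\g_1\oplus\g_2)\cong(\g_1\oplus\g_2)\cw(\g_1\oplus\g_2)$. Next I would invoke Corollary \ref{coro2.6} to split the right-hand side as $(\g_1\cw\g_1)\oplus(\g_2\cw\g_2)\oplus(\g_1^{ab}\ast\g_2^{ab})$. Proposition 2.2, applied separately to $\g_1$ and to $\g_2$, then turns the first two summands into $\g_1\otimes\g_1$ and $\g_2\otimes\g_2$.

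The remaining term $\g_1^{ab}\ast\g_2^{ab}$ is handled by Lemma \ref{lem2.1}$(i)$. Viewing $\g_1^{ab}$ and $\g_2^{ab}$ as the two coordinate ideals of the abelian Leibniz algebra $\g_1^{ab}\oplus\g_2^{ab}$, they act trivially on one another; moreover, being already abelian, their own abelianisations coincide with themselves. Lemma \ref{lem2.1}$(i)$ therefore yields $\g_1^{ab}\ast\g_2^{ab}\cong(\g_1^{ab}\otimes_{\F}\g_2^{ab})\oplus(\g_2^{ab}\otimes_{\F}\g_1^{ab})$, which produces the final two summands.

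Assembling the three isomorphisms completes the proof. There is essentially no hard step: the only point requiring care is to make sure that the comparison between the Lie non-abelian tensor product and the Leibniz non-abelian tensor product is consistent across the three applications of Proposition 2.2, so that the composed isomorphism is genuinely a Lie (and not merely vector space) isomorphism. This is automatic because each invocation of Proposition 2.2 is itself a Lie isomorphism, and the two remaining summands $\g_i^{ab}\otimes_{\F}\g_j^{ab}$ are central (abelian) in both sides, so the bracket structure is respected termwise.
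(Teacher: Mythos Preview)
Your proof is correct and follows exactly the route indicated by the paper, which simply states that the result is obtained by combining Corollary~\ref{coro2.6} with Lemma~\ref{lem2.1}$(i)$ and Proposition~2.2. Your write-up just makes explicit the order in which these three ingredients are applied and the minor check that the resulting isomorphism respects the Lie bracket.
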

Let $(\g,\n)$ be a pair of Lie algebras. In general, the Lie
algebras $\g\curlywedge\n$ and $\g\otimes\n$ are not isomorphic.
For example, if $(\g,\n)$ is a pair of abelian Lie algebras with
$\dim(\n)=n$ and $\dim(\g/\n)=m$, then
$\dim(\g\curlywedge\n)=n(n+2m)$, while $\dim(\g\otimes\n)=n(n+m)$.
However, the following proposition provides, under some condition,
a precise relationship between these Lie algebras.
\begin{proposition}\label{prop2.8}
Let $(\g,\n)$ be a pair of Lie algebras. If $\n$ has an ideal
complement in $\g$, then\vspace{-.2cm}
\[\g\curlywedge\n\cong(\g\otimes\n)\oplus(\f{\n}{[\g,\n]}
\otimes_\F\f{\g}{\g^2+\n}).\vspace{-.2cm}\]In particular, if the
pair $(\g,\n)$ is perfect $($that is, $[\g,\n]=\n)$, then
$\g\curlywedge\n\cong(\g\otimes\n)$.
\end{proposition}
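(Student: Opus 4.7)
The plan is to reduce the computation of $\g\curlywedge\n$ to an application of Proposition~\ref{prop2.3} after splitting $\g$ via an ideal complement $\mathfrak{h}$ of $\n$. Writing $\g=\n\dot{+}\mathfrak{h}$, and noting that both summands are ideals with trivial intersection so that $[\n,\mathfrak{h}]=[\mathfrak{h},\n]=0$ and $\g\cong\n\oplus\mathfrak{h}$ as Lie algebras, a short computation yields $[\g,\n]=\n^2$ and $\g^2+\n=\n+\mathfrak{h}^2$. Consequently
\[\f{\n}{[\g,\n]}\cong\n^{ab}\qquad\text{and}\qquad\f{\g}{\g^2+\n}\cong\mathfrak{h}^{ab};\]
these identifications will line up the summands at the end.

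I would then apply Proposition~\ref{prop2.3} with $(\g_1,\n_1)=(\n,\n)$ and $(\g_2,\n_2)=(\mathfrak{h},0)$. Because $\n_2=0$ the auxiliary subalgebra $\mathfrak{a}$ vanishes and two of the four terms in the formula collapse, leaving $\g\curlywedge\n\cong(\n\curlywedge\n)\oplus(\n^{ab}\ast\mathfrak{h}^{ab})$. Proposition~2.2 applied to the Lie algebra $\n$ turns the first summand into $\n\otimes\n$, while Lemma~\ref{lem2.1}(i), applied to the abelian Leibniz algebras $\n^{ab}$ and $\mathfrak{h}^{ab}$ (which act trivially on each other), turns the second into $(\n^{ab}\otimes_\F\mathfrak{h}^{ab})\oplus(\mathfrak{h}^{ab}\otimes_\F\n^{ab})$.

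The chief obstacle is the parallel Lie-tensor decomposition
\[\g\otimes\n\cong(\n\otimes\n)\oplus(\mathfrak{h}^{ab}\otimes_\F\n^{ab}),\]
which is not literally one of the cited statements (Corollary~\ref{coro2.7} treats only $\g\otimes\g$). My plan is to prove it by rerunning the proof of Proposition~\ref{prop2.3} in the Lie-tensor setting: define the candidate map on generators by $(n+h)\otimes n'\mapsto\bigl(n\otimes n',\,\bar h\otimes_\F\bar{n'}\bigr)$; exploit that $\mathfrak{h}$ and $\n$ act trivially on each other so that the Lie analog of Lemma~\ref{lem2.1}(i) forces the subalgebra generated by the $h\otimes n'$ to reduce to $\mathfrak{h}^{ab}\otimes_\F\n^{ab}$ (note that, in contrast to the Leibniz case, only a single such factor appears, which is precisely why the missing summand in the proposition is $\n^{ab}\otimes_\F\mathfrak{h}^{ab}$ and not its symmetric partner); and construct an inverse from the canonical inclusions, verifying compatibility with the defining relations of the Lie non-abelian tensor product.

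Combining the three decompositions and rearranging summands,
\[\g\curlywedge\n\cong(\n\otimes\n)\oplus(\mathfrak{h}^{ab}\otimes_\F\n^{ab})\oplus(\n^{ab}\otimes_\F\mathfrak{h}^{ab})\cong(\g\otimes\n)\oplus(\n^{ab}\otimes_\F\mathfrak{h}^{ab}),\]
and substituting the identifications from the first paragraph gives the stated isomorphism. The perfect case $[\g,\n]=\n$ follows at once since $\n/[\g,\n]=0$ then annihilates the extra summand.
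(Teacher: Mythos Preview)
Your argument is correct, but it follows a different route from the paper's. The paper does not decompose $\g\curlywedge\n$ via Proposition~\ref{prop2.3}; instead it works at the level of the \emph{full} squares. Writing $\g=\n\oplus\mathfrak{k}$, it invokes Corollary~\ref{coro2.7} together with \cite[Proposition~5 and Lemma~7]{Ellis} to obtain $\g\otimes\g\cong(\g\otimes\n)\oplus(\mathfrak{k}\otimes\mathfrak{k})\oplus(\n^{ab}\otimes_\F\mathfrak{k}^{ab})$, then places the short exact sequence $\g\curlywedge\n\rightarrowtail\g\curlywedge\g\twoheadrightarrow\mathfrak{k}\curlywedge\mathfrak{k}$ from Lemma~\ref{lem2.1}(ii) above the corresponding Lie-tensor sequence, uses the isomorphisms $\beta_1,\beta_2$ of Proposition~2.2 on the middle and right columns, and concludes by the short five lemma.

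Your approach is more hands-on: you split $\g\curlywedge\n$ directly with Proposition~\ref{prop2.3} and then need the parallel splitting of $\g\otimes\n$. This avoids the diagram chase but forces you to supply the decomposition $\g\otimes\n\cong(\n\otimes\n)\oplus(\mathfrak{h}^{ab}\otimes_\F\n^{ab})$ yourself. That step is not a genuine obstacle, however: it is exactly the content of \cite[Proposition~5 and Lemma~7]{Ellis}, the same reference the paper uses for its bottom row, so you need not redo Proposition~\ref{prop2.3} in the Lie setting from scratch. Either way the two proofs ultimately rest on the same external input from Ellis; the paper packages it via a five-lemma comparison of the full products, while you compare the relative products summand by summand.
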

\begin{proof}
By hypothesis, there is a Lie decomposition
$\g=\n\oplus\mathfrak{k}$ and also, $[\g,\n]=\n^2$. Invoking
Corollary \ref{coro2.7} and \cite[Proposition 5 and Lemma
7]{Ellis}, we have
$\g\otimes\g\cong(\g\otimes\n)\oplus(\mathfrak{k}\otimes\mathfrak{k})
\oplus(\n^{ab}\otimes_{\F}\mathfrak{k}^{ab})$.
Consider the following commutative diagram of Lie algebras with
exact rows:\vspace{.2cm}
\begin{center}
    \tikzset{node distance=1.cm, auto}
    \begin{tikzpicture}
    \hspace{-.6cm}\node(M1) at(1,3) {$\g\curlywedge\n$};
    \node(M2) at(4,3) {$\g\curlywedge\g$};
    \node(M3) at(7,3) {$\mathfrak{k}\curlywedge\mathfrak{k}$};
    \node(MM1) at(1,1.5) {$(\g\otimes\n)\oplus(\n^{ab}\otimes_{\F}\mathfrak{k}^{ab})$};
    \node (MM2) at(4,1.5) {$\g\otimes\g$};
    \node (MM3) at(7,1.5) {$\mathfrak{k}\otimes\mathfrak{k}$,};
    \draw[>->] (M1) to node{}  (M2);
    \draw[->>] (M2) to node{}  (M3);
    \draw[>->] (MM1) to node{}  (MM2);
    \draw[->>] (MM2) to node{}  (MM3);
    \draw[->] (M1) to node{{\small$\beta_1|$}} (MM1);
    \draw[->] (M2) to node{{\small$\beta_1$}} (MM2);
    \draw[->] (M3) to node{{\small$\beta_2$}} (MM3);
    \end{tikzpicture}
\end{center}
~\vspace{-1cm}\\ where $\beta_1$ and $\beta_2$ are Lie
isomorphisms given in Proposition 2.2, and $\beta_1|$ is the
restriction of $\beta_1$. Applying the short five lemma,
$\beta_1|$ is a Lie isomorphism, as we wished to prove.
\end{proof}
\section{The relative homology of Leibniz algebras}
Let $\g$ be a Leibniz algebra over an arbitrary field $\F$. The
Leibniz homology of $\g$ with trivial coefficients, denoted by
$HL_\ast(\g)$, is the homology of the chain complex
$(CL_n(\g)=\g^{\otimes n},\pa_n)$, $n\geq0$, such that the
boundary map $\pa_n:CL_n(\g)\to CL_{n-1}(\g)$ is a linear map of
vector spaces given by\vspace{-.1cm}
$$\pa_n(x_1\otimes\cdots\otimes x_n)=\sum_{1\leq i<j\leq n}(-1)^{j}(x_1
\otimes\cdots\otimes[x_i,x_j]\otimes\cdots\otimes\widehat{x_j}\otimes\cdots\otimes
x_n)~~~,~~~n\geq1,\vspace{-.1cm}$$where the notation
$\widehat{x_j}$ means that the variable $x_j$ is omitted. It is
easily seen that $HL_(\g)\cong\g^{ab}$ and by \cite[Theorem
3.16]{Donadze}, $HL_2(\g)\cong\ker(\g\curlywedge\g\lo\g)$. Let
$\n$ be an ideal of $\g$, then the natural epimorphism
$\pi:\g\to\g/\n$ induces a morphism of chain complexes
$\pi_*:(CL_*(\g),\pa_*)\to (CL_*(\g/\n),\bar\pa_*)$. Let
$(M_*,\delta_*)=M(\pi_*)$ be the mapping cone of $\pi_*$, that is,
$M_n=CL_{n-1}(\g)\oplus CL_n(\g/\n)$ and
$\delta_n(a,b)=(-\pa_{n-1}(a),\bar\pa_n(b)+\pi_n(a))$. Setting
$(K^+_*,\pa_*^+)$ to be the complex $(CL_*(\g),\pa_*)$ with the
dimensions all raised by one and the sign of the boundary changed,
that is $(K^+)_n=CL_{n-1}(\g)$, then there exists  a short exact
sequence of complexes $0\to (CL_*(\g/\n),\bar\pa_*)\to
(M_*,\delta_*)\to(K^+_*,\pa_*^+)\to 0$. This sequence induces a
long exact sequence of homology groups\vspace{-.1cm}
\begin{equation}\label{seq}
\cdots\to HL_n(\g/\n)\to H_n(M_*,\delta_*)\to HL_{n-1}(\g)\to
HL_{n-1}(\g/\n)\to\cdots,\vspace{-.1cm}
\end{equation}
see \cite[Page 47]{Maclane} for more details. The abelian Leibniz
algebra $H_{n+1}(M_*,\delta_*)$ is called the $n$-th {\it relative
homology} of the pair $(\g,\n)$ and denoted by $HL_n(\g,\n)$. It
was shown in \cite[Propositions 2,4]{Casas} that $HL_1(\g,\n)\cong
\n/[\g,\n]$ and if $\n$ is a central ideal of $\g$, then
$HL_2(\g,\n)\cong {\rm Coker}(\tau)$, where $\tau:\n\otimes \n\to
(\g/\g^2\otimes \n)\oplus (\n\otimes\g/\g^2)$ is defined by
$\tau(n_1\otimes n_2)=(\bar n_1\otimes n_2,-n_1\otimes \bar n_2)$
for all $n_1,n_2\in\n$. The following proposition gives some
different descriptions of $HL_2(\g,\n)$.
\begin{proposition}\label{prop2.2}
Let $(\g,\n)$ be a pair of Leibniz algebras. Then

$(i)$ $HL_2(\g,\n)$ is isomorphic to the kernel of the commutator
map $\g\curlywedge\n\stackrel{[~,~]}\lo\g$.

$(ii)$ If $\n$ is central, then
$HL_2(\g,\n)\cong(\g^{ab}\ast\n)/\langle\bar n\ast n,n\ast\bar
n~|~n\in\n\rangle$, where $\bar n$ denotes the image of $n$ in
$\g^{ab}$. If in addition $\n\subseteq\g^2$, then
$HL_2(\g,\n)\cong\g^{ab}\ast\n$.

$(iii)$ If $\n$ admits a complement in $\g$, then
$HL_2(\g,\n)\cong\ker(HL_2(\g)\twoheadrightarrow HL_2(\g/\n))$ and
$HL_2(\g)\cong HL_2(\g,\n)\oplus HL_2(\g/\n)$.

$(iv)$ If
$\mathfrak{r}\rightarrowtail\mathfrak{f}\stackrel{\pi}\twoheadrightarrow
\mathfrak{g}$ is a free presentation of $\mathfrak{g}$, where
$\mathfrak{f}$ is a free Leibniz algebra on the set
$\mathfrak{g}$, and $\mathfrak{y}$ is an ideal of $\mathfrak{f}$
generated by the set $\mathfrak{n}$, then
$HL_2(\mathfrak{g},\mathfrak{n})\cong\ds\f{\mathfrak{r}\cap[\mathfrak{y},\mathfrak{f}]}
{[\mathfrak{r},\mathfrak{y}]+[\mathfrak{f},\mathfrak{r}\cap\mathfrak{y}]}$.
\end{proposition}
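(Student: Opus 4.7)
The plan is to prove parts (i)--(iv) in sequence, each leveraging the previous. For (i), I would apply Lemma \ref{lem2.1}(ii) with $\mathfrak{k}=\n$ to obtain the natural exact sequence
$$\g\curlywedge\n\to\g\curlywedge\g\twoheadrightarrow(\g/\n)\curlywedge(\g/\n),$$
and stack it above the three commutator maps onto $\g$, $\g$, and $\g/\n$. The identification $HL_2(\g)=\ker(\g\curlywedge\g\to\g)$ recalled earlier pins down the kernel of the middle commutator, while the long exact sequence (\ref{seq}) together with $HL_1(\g,\n)\cong\n/[\g,\n]$ connects $HL_2(\g,\n)$ to $HL_2(\g)$ and $HL_2(\g/\n)$. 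A diagram chase then identifies $\ker(\g\curlywedge\n\to\g)$ with $HL_2(\g,\n)$.

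For (ii), centrality of $\n$ gives $[\g,\n]=[\n,\g]=0$, so the commutator map in (i) is zero and $HL_2(\g,\n)\cong\g\curlywedge\n$. Since $\g$ and $\n$ then act trivially on each other, Lemma \ref{lem2.1}(i) yields $\g\ast\n\cong\g^{ab}\ast\n$ (using $\n^{ab}=\n$). Passing to $\g\curlywedge\n$ amounts to quotienting by the image of $\g\square\n$, and a direct computation from the definition of $\square$ identifies this image with the subspace generated by $\bar n\ast n$ and $n\ast\bar n$. If additionally $\n\subseteq\g^2$, then $\bar n=0$ in $\g^{ab}$ for all $n\in\n$, so the denominator is trivial.

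For (iii), if $\n$ admits a complement $\mathfrak{k}$ in $\g$, Proposition \ref{prop2.3} applied to $(\g_1,\n_1)=(\n,\n)$ and $(\g_2,\n_2)=(\mathfrak{k},0)$ yields $\g\curlywedge\n\cong(\n\curlywedge\n)\oplus(\n^{ab}\ast\mathfrak{k}^{ab})$, while Corollary \ref{coro2.6} gives $\g\curlywedge\g\cong(\n\curlywedge\n)\oplus(\mathfrak{k}\curlywedge\mathfrak{k})\oplus(\n^{ab}\ast\mathfrak{k}^{ab})$. The two decompositions are compatible with the commutator maps, and since $\g=\n\dot{+}\mathfrak{k}$, the images of the two direct summands in $\g$ intersect trivially. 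Taking kernels together with (i) and $HL_2(\mathfrak{k})\cong HL_2(\g/\n)$ then yields both the splitting $HL_2(\g)\cong HL_2(\g,\n)\oplus HL_2(\g/\n)$ and the identification $HL_2(\g,\n)\cong\ker(HL_2(\g)\twoheadrightarrow HL_2(\g/\n))$.

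For (iv), I would apply (i) to the pair $(\mathfrak{f},\mathfrak{y})$: since $\mathfrak{f}$ is free, $HL_2(\mathfrak{f})=0$, so $HL_2(\mathfrak{f},\mathfrak{y})\cong\ker(\mathfrak{f}\curlywedge\mathfrak{y}\to\mathfrak{f})$ coincides with $(\mathfrak{r}\cap\mathfrak{y})\curlywedge$-type content inside $\mathfrak{f}\curlywedge\mathfrak{y}$. Chasing the presentation $\mathfrak{r}\rightarrowtail\mathfrak{f}\twoheadrightarrow\g$ and its induced maps on exterior products identifies $HL_2(\g,\n)$ with $(\mathfrak{r}\cap[\mathfrak{y},\mathfrak{f}])/([\mathfrak{r},\mathfrak{y}]+[\mathfrak{f},\mathfrak{r}\cap\mathfrak{y}])$, the relative Hopf formula. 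The main obstacle is grounding (i): the diagram chase must correctly handle the connecting homomorphism of (\ref{seq}) and its compatibility with the commutator maps on exterior products. Once (i) is in place, parts (ii)--(iv) follow by direct manipulation of previously established structural results.
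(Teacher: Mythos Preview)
Your treatment of (i), (ii), and (iv) matches the paper's, which is extremely brief: it cites \cite[Proposition~4.3]{Donadze} for (i) and \cite[Proposition~2.1]{edalatzadeh3} for (iv), and derives (ii) from (i) together with Lemma~\ref{lem2.1}(i) exactly as you propose. Your acknowledgement that the diagram chase in (i) is the delicate point is apt, since the top row coming from Lemma~\ref{lem2.1}(ii) is only right exact.

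The substantive divergence is (iii). The paper obtains it directly from the long exact sequence~(\ref{seq}): a complement $\mathfrak{k}$ of $\n$ gives a Leibniz section $\g/\n\cong\mathfrak{k}\hookrightarrow\g$, so each map $HL_i(\g)\to HL_i(\g/\n)$ is split surjective, and the long exact sequence collapses to the split short exact sequence $HL_2(\g,\n)\rightarrowtail HL_2(\g)\twoheadrightarrow HL_2(\g/\n)$. Your route through Proposition~\ref{prop2.3} and Corollary~\ref{coro2.6} is clean but tacitly assumes $\g=\n\oplus\mathfrak{k}$ as a \emph{direct sum of Leibniz algebras}, i.e.\ that $\mathfrak{k}$ is an ideal complement. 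That is a strictly stronger hypothesis than the statement asserts and than the paper later uses: in the proof of Proposition~\ref{coro2.5}(ii), part (iii) is invoked with a complement $\mathfrak{k}=\langle x\rangle+\mathfrak{a}(q)$ inside $H_1\oplus\mathfrak{a}(q)$ which is only a subalgebra (indeed $[y,x]=-z\notin\mathfrak{k}$). So your argument for (iii), while correct when the complement is an ideal, does not establish the proposition in the generality the paper requires; the sequence~(\ref{seq}) argument is needed to cover the subalgebra-complement case.
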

\begin{proof}
Parts $(i)$ and $(iv)$ are proved in \cite[Proposition
4.3]{Donadze} and \cite[Proposition 2.1]{edalatzadeh3},
respectively. Part $(ii)$ is a consequence of part $(i)$ and Lemma
2.1$(i)$. Part $(iii)$ is obtained from the sequence $(2)$.
\end{proof}
By Propositions \ref{prop2.8} and \ref{prop2.2}$(i)$, we have the
following which generalizes \cite[Proposition 6.2]{Donadze}
slightly.
\begin{corollary}\label{coro2.9}
Under the assumptions of Proposition $\ref{prop2.8}$, there is an
isomorphism of vector spaces\vspace{-.2cm}
\[HL_2(\g,\n)\cong\ker(\g\otimes\n\lo\g)\oplus(\f{\n}{[\g,\n]}
\otimes_\F\f{\g}{\g^2+\n}).\vspace{-.2cm}\]In particular, if the
pair $(\g,\n)$ is perfect, then $HL_2(\g,\n)\cong H_2(\g,\n)$,
where $H_2(\g,\n)$ denotes the second relative Chevalley-Eilenberg
homology of the pair $(\g,\n)$.
\end{corollary}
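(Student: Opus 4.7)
The plan is to combine the commutator-kernel description of $HL_2$ supplied by Proposition~\ref{prop2.2}$(i)$ with the splitting of the exterior product established in Proposition~\ref{prop2.8}. Proposition~\ref{prop2.2}$(i)$ identifies $HL_2(\g,\n)$ with the kernel of the commutator map $[~,~]:\g\cw\n\lo\g$, while Proposition~\ref{prop2.8} decomposes $\g\cw\n$ as $(\g\otimes\n)\oplus(\n/[\g,\n]\otimes_\F\g/(\g^2+\n))$. The whole corollary then reduces to transporting the commutator map through this decomposition and taking kernels summand by summand.

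The first step is to show that, under the isomorphism of Proposition~\ref{prop2.8}, the commutator map corresponds to the Lie commutator $\g\otimes\n\lo\g$ on the first summand and to the zero map on the second. The isomorphism in Proposition~\ref{prop2.8} is built by restricting the Lie isomorphism $\beta_1:\g\cw\g\stackrel{\cong}{\lo}\g\otimes\g$ from Proposition 2.2 to the subalgebra $\g\cw\n$; since $\beta_1$ sends $x\cw y$ to $x\otimes y$, compatibility with the commutator on $\g\otimes\n$ is automatic. The second summand, under the identifications $\n/[\g,\n]\cong\n^{ab}$ and $\g/(\g^2+\n)\cong\mathfrak{k}^{ab}$ (where $\mathfrak{k}$ is the chosen ideal complement, and which follow from $[\g,\n]=\n^2$ and $\g^2=\n^2+\mathfrak{k}^2$), sits inside $\g\otimes\g$ as the image of tensors $n\otimes k$ with $n\in\n$ and $k\in\mathfrak{k}$. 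The commutator sends such a tensor to $[n,k]$, which lies in $\n\cap\mathfrak{k}=0$ because both $\n$ and $\mathfrak{k}$ are ideals. This vanishing is the key non-formal point and is the main obstacle of the argument; everything else is bookkeeping on top of Propositions~\ref{prop2.2} and \ref{prop2.8}.

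Taking kernels of the two descriptions of $\g\cw\n\lo\g$ then yields the first displayed isomorphism of the corollary. For the ``in particular'' clause, the perfectness assumption $[\g,\n]=\n$ forces $\n/[\g,\n]=0$, so the second summand collapses and $HL_2(\g,\n)\cong\ker(\g\otimes\n\lo\g)$. The right-hand side is the classical Ellis description of the second relative Chevalley--Eilenberg homology $H_2(\g,\n)$ (the Lie-algebraic analogue of Proposition~\ref{prop2.2}$(i)$), completing the identification $HL_2(\g,\n)\cong H_2(\g,\n)$.
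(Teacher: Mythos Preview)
Your proof is correct and follows exactly the approach the paper indicates: the paper derives the corollary directly from Propositions~\ref{prop2.8} and~\ref{prop2.2}$(i)$ without writing out details, and you have simply filled in the verification that under the decomposition of Proposition~\ref{prop2.8} the commutator map is the Lie commutator on the first summand and vanishes on the second (because $[\n,\mathfrak{k}]\subseteq\n\cap\mathfrak{k}=0$). Your treatment of the perfect case is likewise what the paper intends.
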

Let $\g_1,\g_2$ be two Leibniz algebras. The K\"unneth-style
formula for the homology of direct sum of Leibniz algebras
$\g_1,\g_2$, was extended in \cite{LodayKuneth} by Loday.  He
proved that there is a canonical isomorphism of graded modules
$HL_*(\g_1\oplus\g_2)\cong HL_*(\g_1)\star HL_*(\g_2)$ (here the
symbol ``$\star$" means the non-commutative tensor product of
graded modules). In the special case, for the second degree,
theorem states that $HL_2(\g_1\oplus\g_2)\cong HL_2(\g_1)\oplus
HL_2(\g_2)\oplus
(\g_1^{ab}\otimes_{\F}\g_2^{ab})\oplus(\g_2^{ab}\otimes_{\F}
\g_1^{ab})$. Note that one can use Lemma 2.1$(i)$ to re-arrange
the formula as\vspace{-.2cm}
\begin{equation}\label{eq1}
HL_2(\g_1\oplus\g_2)\cong HL_2(\g_1)\oplus
HL_2(\g_2)\oplus(\g_1^{ab}\ast\g_2^{ab}).\vspace{-.2cm}
\end{equation}
As an immediate consequence of Propositions 2.3 and
\ref{prop2.2}$(i)$, we can obtain the following generalization of
the formula $(3)$.
\begin{corollary}\label{coro3.3}
$($The generaliztion of K\"unneth-Loday formula$)$ Let
$(\mathfrak{g}_1,\mathfrak{n}_1)$ and
$(\mathfrak{g}_2,\mathfrak{n}_2)$ be arbitrary pairs of Leibniz
algebras. Then\vspace{-.1cm}
\[HL_2(\mathfrak{g}_1\oplus\mathfrak{g}_2,\mathfrak{n}_1\oplus\mathfrak{n}_2)
\cong HL_2(\mathfrak{g}_1,\mathfrak{n}_1)\oplus
HL_2(\mathfrak{g}_2,\mathfrak{n}_2)\oplus\ds
\f{(\overline{\mathfrak{n}_1}\ast\overline{\mathfrak{g}_2})\oplus
(\overline{\mathfrak{g}_1}\ast\overline{\mathfrak{n}_2})}{\mathfrak{a}}.\vspace{-.1cm}\]
\end{corollary}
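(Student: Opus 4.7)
The plan is to combine the two propositions the author explicitly cites. By Proposition \ref{prop2.2}$(i)$ applied to the pair $(\g_1\oplus\g_2,\n_1\oplus\n_2)$, the object $HL_2(\g_1\oplus\g_2,\n_1\oplus\n_2)$ is the kernel of the commutator map
\[
c:(\g_1\oplus\g_2)\curlywedge(\n_1\oplus\n_2)\lo\g_1\oplus\g_2,\qquad (x_1,x_2)\curlywedge(n_1,n_2)\longmapsto([x_1,n_1],[x_2,n_2]),
\]
so the whole problem reduces to transporting $c$ across the isomorphism $\varphi$ of Proposition \ref{prop2.3} and then taking the kernel summand by summand.

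Next, I would inspect what $c$ becomes on each of the three summands appearing in Proposition \ref{prop2.3}. On $\g_1\curlywedge\n_1$, the image of $x_1\curlywedge n_1$ under $\psi_1$ is $(x_1,0)\curlywedge(n_1,0)$, whose commutator is $([x_1,n_1],0)$; so here $c$ is just the composition of the commutator map $\g_1\curlywedge\n_1\to\g_1$ with the inclusion $\g_1\hookrightarrow\g_1\oplus\g_2$. The analogous statement holds on $\g_2\curlywedge\n_2$. On the third summand, the map $\bar\eta$ constructed in the proof of Proposition \ref{prop2.3} sends the generators $\bar n_1\ast\bar x_2$, $\bar x_2\ast\bar n_1$, $\bar x_1\ast\bar n_2$, $\bar n_2\ast\bar x_1$ into elements of the form $(0,\ast)\curlywedge(\ast,0)$ or $(\ast,0)\curlywedge(0,\ast)$, each of which lies in the kernel of $c$ because $\g_1\oplus 0$ and $0\oplus\g_2$ centralize each other in $\g_1\oplus\g_2$. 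Hence $c\circ\varphi^{-1}$ acts block-diagonally and annihilates the third summand.

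Once this is in place, the kernel of $c$ decomposes as
\[
\ker(\g_1\curlywedge\n_1\to\g_1)\oplus\ker(\g_2\curlywedge\n_2\to\g_2)\oplus\f{(\overline{\n_1}\ast\overline{\g_2})\oplus(\overline{\g_1}\ast\overline{\n_2})}{\mathfrak{a}},
\]
and a second application of Proposition \ref{prop2.2}$(i)$ identifies the first two kernels with $HL_2(\g_1,\n_1)$ and $HL_2(\g_2,\n_2)$, which is exactly the asserted formula.

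There is no real obstacle: the content of the corollary is already packaged into Proposition \ref{prop2.3}, and the only thing to check is the elementary observation that the cross terms $(0,x_2)\curlywedge(n_1,0)$ and $(x_1,0)\curlywedge(0,n_2)$ commute to zero in $\g_1\oplus\g_2$, so that the third summand of $\varphi$'s target sits entirely inside $\ker c$. The one place where mild care is required is in confirming that $\bar\eta$ annihilates $\mathfrak a$ at the level of $c$, but since $\bar\eta$ itself is defined on the quotient by $\mathfrak a$ in Proposition \ref{prop2.3}, this is automatic.
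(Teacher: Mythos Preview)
Your proof is correct and is exactly the argument the paper intends: the text simply states that the corollary is ``an immediate consequence of Propositions \ref{prop2.3} and \ref{prop2.2}$(i)$'' without further elaboration, and what you have written is precisely the unpacking of that sentence. The only work is the observation you made---that under the isomorphism $\varphi$ of Proposition \ref{prop2.3} the commutator map restricts to the two individual commutator maps on the first two summands and vanishes on the third---so your proposal matches the paper's approach completely.
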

To state another interesting consequence of Proposition
\ref{prop2.2}, we first need the following.

A finite dimensional nilpotent Leibniz algebra $\g$ is called {\it
extra special} if $\dim(Z(\g))=\dim(\g^2)=1$. We denote an extra
special Leibniz algebra by $\e$ and an abelian Leibniz algebra of
dimension $q$ by $\mathfrak{a}(q)$. The following proposition
gives the dimension of the second homology of finite dimensional
nilpotent Leibniz algebras with derived subalgebra of dimension
$1$.
\begin{proposition}[\cite{edalatzadeh3}]\label{prop2.3}
$(i)$ $\dim(HL_2(\e))=(\dim(\e)-1)^2-1+t$, for some integer
$t\leq2\dim(\e)$. In particular, if the ground field is
algebraically closed of characteristic different from two, then
$t=1$ if $\e=J_1$ or $\e=J_2$, where $J_1=\langle
x,y~|~[x,x]=y\rangle$ and $J_2=\langle x,y,z~|~[x,y]=z\rangle$;
and $t=2$ if $\e=H_1$, where $H_1=\langle
x,y,z~|~[x,y]=z=-[y,x]\rangle$; and $t=0$ otherwise.

$(ii)$ If $\g$ is a finite dimensional nilpotent Leibniz algebra
with $\dim(\g^2)=1$, then $\g=\e\oplus\mathfrak{a}(q)$ for some
$q\geq0$, and so $\dim(HL_2(\g))=(\dim(\g)-1)^2-1+t$.
\end{proposition}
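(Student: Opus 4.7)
The plan is to reduce both parts to structure-theoretic facts about $2$-step nilpotent Leibniz algebras and then exploit Proposition 2.2$(i)$ together with the K\"unneth--Loday formula $(3)$. For part $(i)$, first observe that $\e^2\subseteq Z(\e)$ (since $\e$ is nilpotent with $\dim\e^2=1$, forcing $\e$ to be $2$-step nilpotent), so in fact $\e^2=Z(\e)$. By Proposition 2.2$(i)$, $HL_2(\e)\cong\ker(\e\curlywedge\e\lo\e^2)$, so the task reduces to describing $\e\curlywedge\e$ and the commutator map. Applying Lemma 2.1$(ii)$ to the central ideal $\e^2\subseteq\e$ gives the exact sequence $\e\curlywedge\e^2\lo\e\curlywedge\e\twoheadrightarrow(\e/\e^2)\curlywedge(\e/\e^2)$. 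Since $\e/\e^2$ is abelian of dimension $n-1$ (with $n=\dim\e$), Lemma 2.1$(i)$ identifies its exterior square with a quotient of two copies of $\e^{ab}\otimes_{\F}\e^{ab}$, of total dimension $(n-1)^2$, and the commutator map factors through the bracket bilinear form $b:\e^{ab}\times\e^{ab}\to\e^2$; its kernel contributes exactly $(n-1)^2-1$ to $\dim HL_2(\e)$. The extra summand $t$ comes from the image of $\e\curlywedge\e^2$ in $\e\curlywedge\e$, which by the relations $(4a)$--$(5d)$ is spanned by at most $2n$ classes of the form $x\curlywedge z$ or $z\curlywedge x$ with $z\in\e^2$.

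For the distinguished cases over an algebraically closed field of characteristic different from two, I would classify the bilinear form $b$ (valued in $\e^2\cong\F$) up to change of basis on $\e^{ab}$. Splitting $b$ into its symmetric and antisymmetric parts and diagonalizing each, one obtains the four normal forms corresponding to $J_1$ (purely symmetric of rank one), $J_2$ (hyperbolic symmetric of rank two), $H_1$ (purely antisymmetric of rank two), and the generic case; in each situation a direct computation with the surviving generators of $\e\curlywedge\e^2$ yields $t=1,\,1,\,2,\,0$ respectively.

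For part $(ii)$, the strategy is to peel off an abelian direct summand from the centre. Pick a vector-space decomposition $Z(\g)=\g^2\oplus C$ and a complement $W$ of $Z(\g)$ in $\g$, and set $\e:=W\oplus\g^2$. Then $\g=\e\oplus C$ as a direct sum of ideals: $C$ is central and so an ideal, while all brackets in $\g$ land in $\g^2\subseteq\e$ (using $\g^2\subseteq Z(\g)$ for $2$-step nilpotent $\g$ and the centrality of $C$), so $\e$ is an ideal too and $[\e,C]=[C,\e]=0$. Moreover $\e^2=\g^2$ is one-dimensional and $Z(\e)=\e\cap Z(\g)=\g^2$, so $\e$ is extra special with $q:=\dim C$ and $C\cong\mathfrak{a}(q)$. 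The dimension identity then follows from part $(i)$ and formula $(3)$: writing $n_0:=\dim\e=\dim\g-q$, Lemma 2.1$(i)$ gives $\dim(\e^{ab}\ast\mathfrak{a}(q)^{ab})=2(n_0-1)q$ and similarly $\dim HL_2(\mathfrak{a}(q))=q^2$, so the total becomes $(n_0-1)^2-1+t+q^2+2(n_0-1)q=(\dim\g-1)^2-1+t$.

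The main obstacle is the determination of $t$ in part $(i)$: the image of $\e\curlywedge\e^2$ depends on delicate identifications produced by the Leibniz-specific relations $(3a)$--$(5d)$, and matching these against the three exceptional algebras $J_1,\,J_2,\,H_1$ requires a careful case-by-case analysis of the bilinear form $b$ in which the algebraically-closed, characteristic $\neq 2$ hypothesis is essential in order to simultaneously diagonalize its symmetric and antisymmetric parts.
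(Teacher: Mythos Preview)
The paper does not prove this proposition; it is quoted verbatim from \cite{edalatzadeh3}, so there is no ``paper's own proof'' to compare against. I can therefore only assess your outline on its merits.

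Your treatment of part $(ii)$ and the coarse shape of part $(i)$ are fine: the exact sequence of Lemma~2.1$(ii)$ applied to $\e^2\subseteq\e$ does give $\dim(\e\curlywedge\e)=(n-1)^2+t$ with $t$ the dimension of the image of $\e\curlywedge\e^2$, and since $\e^2$ is central the whole of that image lands in $\ker([\, ,\,])$, yielding $\dim HL_2(\e)=(n-1)^2-1+t$ with $t\le 2n$. The splitting $\g=\e\oplus\mathfrak a(q)$ and the K\"unneth computation in part $(ii)$ are correct.

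Where your argument breaks is in the case analysis for the exact value of $t$. Your description of $J_2$ is wrong: in $J_2=\langle x,y,z\mid [x,y]=z\rangle$ one has $b(\bar x,\bar y)=1$ and $b(\bar y,\bar x)=0$, so the bracket form is neither symmetric nor antisymmetric --- it is a general bilinear form of rank~$1$, not a ``hyperbolic symmetric'' form. More fundamentally, the proposed strategy of ``simultaneously diagonalizing'' the symmetric and antisymmetric parts of $b$ cannot work: an antisymmetric form is never diagonal (its diagonal entries vanish), and there is no reason a single basis should put both parts into normal form at once. The actual classification of extra-special Leibniz algebras over an algebraically closed field of characteristic $\ne 2$ (carried out in \cite{edalatzadeh3} and \cite{Edalatzadeh}) proceeds differently, and the computation of $t$ in each case hinges on which relations $(3a)$--$(3d)$ force the generators $x\curlywedge z$, $z\curlywedge x$ to vanish --- something your sketch does not address beyond naming it as ``the main obstacle.'' So the gap you identify at the end is real and is not bridged by the bilinear-form heuristic you offer.
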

In the following corollary, we get the dimension of the second
relative homology of some known pairs of Leibniz algebras.
\begin{corollary}\label{coro2.4}
$(i)$ If $\n$ is an $n$-dimensional ideal of $\mathfrak{a}(q)$,
then $\dim(HL_2(\mathfrak{a}(q),\n))=n(2q-n)$.

$(ii)$ For any finite dimensional nilpotent Leibniz algebra $\g$
with $\dim(\g^2)=1$, $\dim(HL_2(\g,\g^2))=2\dim(\g^{ab})$.

$(iii)$ For any non-zero ideal $\n$ of $J_2$ or $H_1$, we have \vspace{.25cm}\\
\vspace{1cm}~\hspace{.7cm}$\dim(HL_2(J_2,\n))=
\begin{cases}
3 & {\rm if}~
\dim(\n)=2\\
4 & {\rm otherwise}
\end{cases}$\vspace{-.8cm}
\vspace{1cm}~\hspace{.55cm},\hspace{.55cm}$\dim(HL_2(H_1,\n))=
\begin{cases}
5 & {\rm if}~
\n=H_1\\
4 & {\rm otherwise}.
\end{cases}$\vspace{-.8cm}
\end{corollary}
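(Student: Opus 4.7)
The plan is to dispatch each of the three parts by combining the structural descriptions of $HL_2(\g,\n)$ in Proposition \ref{prop2.2} with the tensor-product reduction of Lemma \ref{lem2.1}$(i)$, adding in part $(iii)$ a small classification of ideals of $J_2$ and $H_1$ together with the extra-special dimension formula of Proposition \ref{prop2.3}$(i)$.

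For part $(i)$, since $\g=\mathfrak{a}(q)$ is abelian, every subspace is a central ideal and the commutator map $\g\curlywedge\n\to\g$ of Proposition \ref{prop2.2}$(i)$ vanishes, so $HL_2(\mathfrak{a}(q),\n)\cong\g\curlywedge\n$. Lemma \ref{lem2.1}$(i)$ identifies $\g\ast\n$ with $(\g\otimes_{\F}\n)\oplus(\n\otimes_{\F}\g)$ of dimension $2qn$. Unwinding the definition of $\g\square\n$ under $\mu=\mathrm{id}_{\g}$ and $\lambda:\n\hookrightarrow\g$ shows that its generators form an antidiagonal copy of $\n\otimes_{\F}\n$ of dimension $n^2$, and subtracting gives $\dim(\g\curlywedge\n)=2qn-n^2=n(2q-n)$.

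For part $(ii)$, $\dim(\g^2)=1$ together with nilpotency forces $\g^3=0$, so $\g^2$ is a central ideal contained in $\g^2$, and the second assertion of Proposition \ref{prop2.2}$(ii)$ yields $HL_2(\g,\g^2)\cong\g^{ab}\ast\g^2$. Since $\g^{ab}$ and $\g^2$ are abelian with trivial mutual action, Lemma \ref{lem2.1}$(i)$ produces $\g^{ab}\ast\g^2\cong(\g^{ab}\otimes_{\F}\g^2)\oplus(\g^2\otimes_{\F}\g^{ab})$, of dimension $2\dim(\g^{ab})$.

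For part $(iii)$ I would first verify that in each of $J_2$ and $H_1$ the only one-dimensional ideal is $\g^2=\langle z\rangle$, every two-dimensional ideal has the form $\langle z,v\rangle$ with $v\in\langle x,y\rangle\setminus\{0\}$, and the only three-dimensional ideal is $\g$ itself. The case $\dim(\n)=1$ reduces to part $(ii)$, giving dimension $4$ in both algebras. The case $\n=\g$ is a direct application of Proposition \ref{prop2.3}$(i)$, giving $(3-1)^2-1+1=4$ for $J_2$ (with $t=1$) and $(3-1)^2-1+2=5$ for $H_1$ (with $t=2$). For $\dim(\n)=2$ one needs a one-dimensional subalgebra complement $\mathfrak{k}=\langle u\rangle$ to $\n$: in $H_1$ this is automatic because $[u,u]=0$ for every $u$, whereas in $J_2$ the condition $[u,u]=0$ translates into $ab=0$ for $u=ax+by+cz$, which is secured by choosing $u=x$ or $u=y$ outside $\n$. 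Proposition \ref{prop2.2}$(iii)$, together with $\dim HL_2(\mathfrak{a}(1))=1$ (itself a consequence of part $(i)$), then gives $\dim HL_2(\g,\n)=\dim HL_2(\g)-1$, namely $3$ for $J_2$ and $4$ for $H_1$. The main obstacle I anticipate is this last subalgebra-complement construction in $J_2$, whose nonzero self-brackets prevent simply taking any transversal line; the remaining computations are essentially assembly of the previously established results.
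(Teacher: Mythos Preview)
Your proof is correct and, for parts $(ii)$ and $(iii)$, follows essentially the same line as the paper: part $(ii)$ is exactly the intended application of Proposition~\ref{prop2.2}$(ii)$, and in part $(iii)$ the paper likewise splits into the cases $\n=\g^2$, $\n=\g$, and $\dim\n=2$, invoking Proposition~\ref{prop2.3}$(i)$ for the full algebra and Proposition~\ref{prop2.2}$(iii)$ with a one-dimensional complement for the two-dimensional ideals. Your treatment of $(iii)$ is in fact more careful than the paper's: the paper only writes out the $H_1$ case and declares $J_2$ ``similar'', whereas you correctly observe that in $J_2$ not every line is a subalgebra and explicitly produce a subalgebra complement $\langle x\rangle$ or $\langle y\rangle$.

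The one genuine difference is in part $(i)$. The paper does not compute $\g\curlywedge\n$ directly; instead it uses Proposition~\ref{prop2.2}$(iii)$: since any ideal of $\mathfrak a(q)$ has a complement, $\dim HL_2(\mathfrak a(q),\n)=\dim HL_2(\mathfrak a(q))-\dim HL_2(\mathfrak a(q{-}n))=q^2-(q-n)^2=n(2q-n)$. Your route via Proposition~\ref{prop2.2}$(i)$ and Lemma~\ref{lem2.1}$(i)$, identifying $\mathfrak a(q)\square\n$ with the antidiagonal copy of $\n\otimes_{\F}\n$, is equally valid and more self-contained (it does not presuppose the value of $\dim HL_2(\mathfrak a(q))$), at the cost of unwinding the definition of $\square$. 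Either argument is short; they simply emphasise different parts of Proposition~\ref{prop2.2}.
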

\begin{proof}
$(i)$ It follows from Proposition \ref{prop2.2}$(iii)$ and the
fact that any ideal of an abelian Leibniz algebra has a
complement.

$(ii)$ It is a straightforward consequence of Proposition
\ref{prop2.2}$(ii)$

$(iii)$ We only prove the case $H_1$, the proof of the other case
is similar. According to Proposition \ref{prop2.3}$(i)$,
$\dim(HL_2(H_1,H_1))=\dim(HL_2(H_1))=5$ and by part $(ii)$,
$\dim(HL_2(H_1,H_1^2))=4$. Now, suppose that $\n$ is a
two-dimensional ideal of $H_1$. Then $\n$ has a one-dimensional
complement, say $\mathfrak{k}$, in $H_1$ and hence, owing to
Proposition \ref{prop2.2}$(iii)$,
$\dim(HL_2(H_1,\n))=\dim(HL_2(H_1))-\dim(HL_2(\mathfrak{k}))=4$.
\end{proof}
It is proved in \cite{edalatzadeh3} that for any pair $(\g,\n)$ of
finite dimensional nilpotent Leibniz algebras, if $\n$ has a
complement in $\g$, then\vspace{-.1cm}
\[\dim(HL_2(\mathfrak{g},\mathfrak{n}))\leq\dim(HL_2(\f{\mathfrak{g}}
{[\mathfrak{g},\mathfrak{n}]},
\f{\mathfrak{n}}{[\mathfrak{g},\mathfrak{n}]}))+2\dim([\mathfrak{g},\mathfrak{n}])
(d(\f{\mathfrak{g}}{Z(\g,\n)})-1)+
\dim([\mathfrak{g},\mathfrak{n}]),\vspace{-.1cm}\] where
$Z(\g,\n)=Z(\g)\cap\n$ and $d(\g)$ is the cardinal of a minimal
generating set of $\g$. Note that in this case,
$\mathfrak{g}^2\cap\mathfrak{n}=[\mathfrak{g},\mathfrak{n}]$.
Using Corollary 3.3, we prove a similar result for any pair of
finite dimensional nilpotent Leibniz algebras.
\begin{theorem}\label{theo3.5}
Let $\mathfrak{g}$ be a finite dimensional nilpotent Leibniz
algebra with an ideal $\mathfrak{n}$. Then\vspace{-.1cm}
\[\dim(HL_2(\mathfrak{g},\mathfrak{n}))\leq\dim(HL_2(\f{\mathfrak{g}}
{\mathfrak{g}^2\cap\mathfrak{n}},
\f{\mathfrak{n}}{\mathfrak{g}^2\cap\mathfrak{n}}))+2\dim(\mathfrak{g}^2\cap\mathfrak{n})
d(\f{\mathfrak{g}}{Z(\mathfrak{g},\mathfrak{n})}).\vspace{-.1cm}\]
\end{theorem}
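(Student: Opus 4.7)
Set $K:=\mathfrak{g}^2\cap\mathfrak{n}$; this is an ideal of $\mathfrak{g}$ satisfying $[\mathfrak{g},\mathfrak{n}]\subseteq K\subseteq\mathfrak{n}$. The plan is to descend along the quotient map $\mathfrak{g}\to\mathfrak{g}/K$ via the exact sequence of Lemma \ref{lem2.1}(ii), which forces the bracket in the quotient pair to vanish, and then to bound the remaining correction by a spanning-set argument.

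Applying Lemma \ref{lem2.1}(ii) to the inclusion $K\subseteq\mathfrak{n}$ yields the exact sequence
$$\mathfrak{g}\curlywedge K\stackrel{\alpha}{\lo}\mathfrak{g}\curlywedge\mathfrak{n}\twoheadrightarrow\mathfrak{g}/K\curlywedge\mathfrak{n}/K.$$
Since $[\mathfrak{g},\mathfrak{n}]\subseteq K$ we have $[\mathfrak{g}/K,\mathfrak{n}/K]=0$, so by Proposition \ref{prop2.2}(i) the commutator map on $\mathfrak{g}/K\curlywedge\mathfrak{n}/K$ is trivial and $HL_2(\mathfrak{g}/K,\mathfrak{n}/K)\cong\mathfrak{g}/K\curlywedge\mathfrak{n}/K$. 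Counting dimensions along the exact sequence and applying Proposition \ref{prop2.2}(i) to $(\mathfrak{g},\mathfrak{n})$ then gives
$$\dim HL_2(\mathfrak{g},\mathfrak{n})=\dim\mathrm{Im}(\alpha)+\dim HL_2(\mathfrak{g}/K,\mathfrak{n}/K)-\dim[\mathfrak{g},\mathfrak{n}].$$
Thus the theorem is equivalent to the estimate $\dim\mathrm{Im}(\alpha)\leq 2\dim(K)\,d(\mathfrak{g}/Z(\mathfrak{g},\mathfrak{n}))+\dim[\mathfrak{g},\mathfrak{n}]$.

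For this estimate, I would fix lifts $\tilde x_1,\ldots,\tilde x_d\in\mathfrak{g}$ of a minimal generating set of $\mathfrak{g}/Z(\mathfrak{g},\mathfrak{n})$ (so $d=d(\mathfrak{g}/Z(\mathfrak{g},\mathfrak{n}))$) and a basis $k_1,\ldots,k_t$ of $K$. The image $\mathrm{Im}(\alpha)$ is generated by $g\curlywedge k$ and $k\curlywedge g$ for $g\in\mathfrak{g}$ and $k\in K$. Writing a general element as $g=p(\tilde x_1,\ldots,\tilde x_d)+z$ with $z\in Z(\mathfrak{g},\mathfrak{n})$ and $p$ a Leibniz polynomial, one exploits the defining relations $(3c),(3d)$ of the exterior product together with the $\mathfrak{g}$-stability of $K$ to reduce $p(\tilde x_i)\curlywedge k$ and $k\curlywedge p(\tilde x_i)$ to linear combinations of the $2dt$ elements $\tilde x_i\curlywedge k_j$ and $k_j\curlywedge\tilde x_i$; nilpotency of $\mathfrak{g}$ guarantees termination of the bracket-depth induction. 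Centrality of $z$ forces the wedges $z\curlywedge k$ and $k\curlywedge z$ to be annihilated by the commutator $\mathfrak{g}\curlywedge\mathfrak{n}\to\mathfrak{g}$, so they are absorbed into the $\dim[\mathfrak{g},\mathfrak{n}]$-correction term of the displayed formula.

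The main obstacle is making this last absorption precise: one has to show rigorously that the central wedges, modulo the $2dt$-dimensional non-central span, contribute dimension at most $\dim[\mathfrak{g},\mathfrak{n}]$, which is delicate when $k\in K\setminus[\mathfrak{g},\mathfrak{n}]$ because the vanishing coming from relations $(3c),(3d)$ is only automatic on $[\mathfrak{g},\mathfrak{n}]$-entries. Corollary \ref{coro3.3} is used here in an essential way: since $(\mathfrak{g}/K)^2\cap(\mathfrak{n}/K)=0$, one may choose a subalgebra complement $V$ of $\mathfrak{n}/K$ in $\mathfrak{g}/K$ containing $(\mathfrak{g}/K)^2$, giving a Leibniz direct-sum decomposition $(\mathfrak{g}/K,\mathfrak{n}/K)=(\mathfrak{n}/K,\mathfrak{n}/K)\oplus(V,0)$. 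Corollary \ref{coro3.3} then produces an explicit tensor-product formula for $HL_2(\mathfrak{g}/K,\mathfrak{n}/K)$, which pins down how the central part of $\mathrm{Im}(\alpha)$ pairs against $[\mathfrak{g},\mathfrak{n}]$ and completes the final inequality.
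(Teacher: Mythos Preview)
Your route is genuinely different from the paper's. The paper argues by induction on $\dim\mathfrak{g}$: when $Z(\mathfrak{g},\mathfrak{n})\subseteq\mathfrak{g}^2$ it kills a one–dimensional central ideal via Proposition~\ref{prop3.6} and appeals to the induction hypothesis; when $Z(\mathfrak{g},\mathfrak{n})\nsubseteq\mathfrak{g}^2$ it splits off an abelian direct summand and applies Corollary~\ref{coro3.3}. Your one–shot use of Lemma~\ref{lem2.1}(ii) with $K=\mathfrak{g}^2\cap\mathfrak{n}$ bypasses the induction entirely and is more direct.

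Where you go astray is in your ``main obstacle''. There is no obstacle: the central wedges vanish outright in $\mathfrak{g}\curlywedge\mathfrak{n}$, not merely in the commutator image. Indeed, for $z\in Z(\mathfrak{g},\mathfrak{n})$ and $k\in K\subseteq\mathfrak{g}^2$, write $k=\sum_i[a_i,b_i]$ with $a_i,b_i\in\mathfrak{g}$; since $z,k\in\mathfrak{n}$, the $\square$–relation lets you read $z\curlywedge k$ as $z_{\mathfrak n}\curlywedge k_{\mathfrak g}$, and then relation~(3b) gives
\[
z\curlywedge[a_i,b_i]=z^{a_i}\curlywedge b_i - z^{b_i}\curlywedge a_i=0,
\]
because $z$ is central. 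The same argument with (3c) gives $k\curlywedge z=0$. So your spanning set for $\mathrm{Im}(\alpha)$ collapses to the $2dt$ elements $\tilde x_i\curlywedge k_j,\ k_j\curlywedge\tilde x_i$, and you obtain
\[
\dim HL_2(\mathfrak{g},\mathfrak{n})\leq \dim HL_2(\mathfrak{g}/K,\mathfrak{n}/K)+2\dim(K)\,d(\mathfrak{g}/Z(\mathfrak{g},\mathfrak{n}))-\dim[\mathfrak{g},\mathfrak{n}],
\]
which is already the desired inequality (in fact sharpened by $\dim[\mathfrak{g},\mathfrak{n}]$). Your final paragraph, invoking Corollary~\ref{coro3.3} to ``pin down how the central part of $\mathrm{Im}(\alpha)$ pairs against $[\mathfrak{g},\mathfrak{n}]$'', is both unnecessary and not an argument as written; drop it and simply record the vanishing above.
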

Our proof of the above theorem requires the following proposition.
\begin{proposition}\label{prop3.6}
Let $\g$ be a Leibniz algebra with ideals $\n$ and $\mathfrak{k}$
such that $\mathfrak{k}\subseteq Z(\g,\n)$. Then there is an exact
sequence of Leibniz algebras\vspace{-.2cm}
\[\g\curlywedge\mathfrak{k}\lo HL_2(\g,\n)\lo
HL_2(\f{\g}{\mathfrak{k}},\f{\n}{\mathfrak{k}})
\twoheadrightarrow[\g,\n]\cap\mathfrak{k}.\vspace{-.2cm}\]
\end{proposition}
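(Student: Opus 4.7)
The strategy is to apply the snake lemma to a two-row commutative diagram whose vertical arrows are commutator maps, and then identify the resulting kernels and cokernels with the objects appearing in the statement via Proposition \ref{prop2.2}$(i)$.

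For the top row, Lemma \ref{lem2.1}$(ii)$ applied to the inclusion $\mathfrak{k}\subseteq\n$ supplies the right-exact sequence
\[\g\curlywedge\mathfrak{k}\stackrel{\alpha}{\lo}\g\curlywedge\n\twoheadrightarrow\f{\g}{\mathfrak{k}}\curlywedge\f{\n}{\mathfrak{k}}\lo 0.\]
For the bottom row, I take the short exact sequence
\[0\lo [\g,\n]\cap\mathfrak{k}\hookrightarrow [\g,\n]\twoheadrightarrow [\g/\mathfrak{k},\n/\mathfrak{k}]\lo 0,\]
where surjectivity of the right map and the identification of its kernel both follow from the equality $[\g/\mathfrak{k},\n/\mathfrak{k}]=([\g,\n]+\mathfrak{k})/\mathfrak{k}$. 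The three vertical arrows I would use between the rows are the commutator maps: $\g\curlywedge\mathfrak{k}\lo [\g,\n]\cap\mathfrak{k}$, $\g\curlywedge\n\twoheadrightarrow [\g,\n]$, and $\g/\mathfrak{k}\curlywedge\n/\mathfrak{k}\twoheadrightarrow [\g/\mathfrak{k},\n/\mathfrak{k}]$. The key observation is that the leftmost vertical arrow is identically zero: since $\mathfrak{k}\subseteq Z(\g,\n)\subseteq Z(\g)$, one has $[\g,\mathfrak{k}]=0$, so every generator $g\curlywedge k$ is sent to $[g,k]=0$. Commutativity of the right square is immediate (both compositions send $g\curlywedge n$ to $[g,n]+\mathfrak{k}$), and commutativity of the left square is trivial since both compositions vanish.

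Applying the snake lemma then produces an exact sequence in which the two rightmost cokernels vanish, because the corresponding commutator maps are surjective by construction. The two middle kernels are identified with $HL_2(\g,\n)$ and $HL_2(\g/\mathfrak{k},\n/\mathfrak{k})$ by Proposition \ref{prop2.2}$(i)$, while the cokernel of the zero left arrow is exactly $[\g,\n]\cap\mathfrak{k}$. Assembling these identifications gives precisely the four-term exact sequence
\[\g\curlywedge\mathfrak{k}\lo HL_2(\g,\n)\lo HL_2(\g/\mathfrak{k},\n/\mathfrak{k})\twoheadrightarrow [\g,\n]\cap\mathfrak{k}.\]

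The main obstacle is really the bookkeeping: one must choose the bottom row so that the left vertical arrow lands in a term from which the cokernel recovers $[\g,\n]\cap\mathfrak{k}$, and then use the centrality hypothesis $\mathfrak{k}\subseteq Z(\g,\n)$ to force that left arrow to be zero. Once the diagram is set up this way, everything else follows mechanically from the snake lemma together with Proposition \ref{prop2.2}$(i)$ and Lemma \ref{lem2.1}$(ii)$.
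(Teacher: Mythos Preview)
Your proposal is correct and follows essentially the same route as the paper: set up the two-row diagram with Lemma~\ref{lem2.1}$(ii)$ on top and the obvious short exact sequence of commutators on the bottom, observe that the left vertical commutator map vanishes by centrality of $\mathfrak{k}$, apply the Snake Lemma, and identify the kernels via Proposition~\ref{prop2.2}$(i)$. The paper's argument is line-for-line the same.
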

\begin{proof}
By Lemma 2.1$(ii)$, we have the following diagram with exact
rows\vspace{.2cm}
\begin{center}
    \tikzset{node distance=2cm, auto}
    \begin{tikzpicture}
    \node(M1) at(1,3) {$\g\curlywedge\mathfrak{k}$};
    \node(M2) at(4,3) {$\g\curlywedge\n$};
    \node(M3) at(7,3) {$\ds\f{\g}{\mathfrak{k}}\curlywedge\f{\n}{\mathfrak{k}}$};
    \node(MM1) at(1,1.3) {$[\g,\n]\cap\mathfrak{k}$};
    \node (MM2) at(4,1.3) {$[\g,\n]$};
    \node(MM3) at(7,1.3) {$[\ds\f{\g}{\mathfrak{k}},\f{\n}{\mathfrak{k}}]$,};
    \draw[->] (M1) to node{}  (M2);
    \draw[->>] (M2) to node{}  (M3);
    \draw[>->] (MM1) to node{}  (MM2);
    \draw[->>] (MM2) to node{}  (MM3);
    \draw[->] (M1) to node{{\small$[~,~]_1$}} (MM1);
    \draw[->] (M2) to node{{\small$[~,~]_2$}} (MM2);
    \draw[->] (M3) to node{{\small$[~,~]_3$}} (MM3);
    \end{tikzpicture}
\end{center}
~\vspace{-1cm}\\where the vertical arrows are the commutator maps.
In this diagram, the right-hand-side square is always commutative.
Since $\mathfrak{k}$ is central, the commutator map $[~,~]_1$ is
equal to the zero map and so the left-hand-side square is also
commutative. The Snake Lemma yields the following exact
sequence\vspace{-.2cm}
\[\ker([~,~]_1)\lo\ker([~,~]_2)\lo\ker([~,~]_3)
\twoheadrightarrow{\rm coker}([~,~]_1)\vspace{-.2cm}.\]The result
now follows from Proposition \ref{prop2.2}$(i)$.
\end{proof}
We get the following corollary from the  previous proposition.
\begin{corollary}\label{coro3.7}
With the assumptions of Proposition $\ref{prop3.6}$, if $\g$ is
finite dimensional, then\vspace{-.15cm}
\[\dim(HL_2(\f{\g}{\mathfrak{k}},\f{\n}{\mathfrak{k}}))
\leq\dim(HL_2(\g,\n))+\dim([\g,\n]\cap\mathfrak{k}).\vspace{-.15cm}\]
In particular, if the ideal $\n$ is central, then
$\dim(HL_2(\g/\mathfrak{k},\n/\mathfrak{k}))\leq\dim(HL_2(\g,\n))$.
\end{corollary}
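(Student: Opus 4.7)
The plan is to extract the dimension bound directly from the four-term exact sequence supplied by Proposition \ref{prop3.6}, since in finite dimensions a short exact sequence converts into an additive relation on dimensions, and a longer exact sequence yields inequalities.

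Concretely, I would label the exact sequence of Proposition \ref{prop3.6} as
\[
\g\curlywedge\mathfrak{k}\stackrel{f}{\lo} HL_2(\g,\n)\stackrel{g}{\lo}
HL_2(\f{\g}{\mathfrak{k}},\f{\n}{\mathfrak{k}})\stackrel{h}{\twoheadrightarrow}[\g,\n]\cap\mathfrak{k},
\]
and then read off two facts: first, that $h$ is surjective, so
$\dim(HL_2(\g/\mathfrak{k},\n/\mathfrak{k}))=\dim(\ker h)+\dim([\g,\n]\cap\mathfrak{k})$; second, that by exactness at $HL_2(\g/\mathfrak{k},\n/\mathfrak{k})$ we have $\ker h=\operatorname{im} g$, so $\dim(\ker h)=\dim(\operatorname{im} g)\leq\dim(HL_2(\g,\n))$. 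Combining these two observations gives
\[
\dim(HL_2(\f{\g}{\mathfrak{k}},\f{\n}{\mathfrak{k}}))\leq\dim(HL_2(\g,\n))+\dim([\g,\n]\cap\mathfrak{k}),
\]
which is the stated inequality. All finiteness hypotheses are needed only to ensure these dimensions are defined; the exactness itself does the work.

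For the ``in particular'' clause, I would simply specialize: if $\n$ is central in $\g$ then $[\g,\n]=0$, hence $[\g,\n]\cap\mathfrak{k}=0$ and the extra term vanishes, yielding $\dim(HL_2(\g/\mathfrak{k},\n/\mathfrak{k}))\leq\dim(HL_2(\g,\n))$. One should observe that the centrality hypothesis of Proposition \ref{prop3.6} (namely $\mathfrak{k}\subseteq Z(\g,\n)=Z(\g)\cap\n$) is automatic here, since $\n$ central forces $Z(\g,\n)=\n$ and we need only that $\mathfrak{k}\subseteq\n$ (which is implicit in forming the quotients $\g/\mathfrak{k}$, $\n/\mathfrak{k}$ in the usual sense).

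There is no substantive obstacle; the entire content is a dimension chase on the established exact sequence, so the proof will be a couple of lines. The only item worth flagging explicitly is the passage from surjectivity and exactness to the additive decomposition of $\dim(HL_2(\g/\mathfrak{k},\n/\mathfrak{k}))$, which I would state as a single sentence rather than expand.
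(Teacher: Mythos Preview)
Your argument is correct and matches the paper's approach: the paper simply states that the corollary follows from Proposition~\ref{prop3.6}, and your dimension chase on the four-term exact sequence is precisely the intended (and only natural) way to extract it. The specialization to central $\n$ via $[\g,\n]=0$ is likewise exactly right.
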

\begin{proof}[Proof of Theorem $\ref{theo3.5}$]
We proceed by induction on $\dim(\g)$. The result certainly holds
when $\g$ is abelian. Suppose that $\dim(\g)\geq2$ and the result
is true for all nilpotent Leibniz algebras of dimension smaller
than $\dim(\g)$. We distinguish the following two cases.\\
{\it Case} $1$. $Z(\g,\n)\subseteq\g^2$. Choose a one-dimensional
ideal $\mathfrak{k}$ of $Z(\g,\n)$. Then, invoking Proposition
\ref{prop3.6}, and using the induction hypothesis, we
have\vspace{-.15cm}
\begin{alignat*}{1}
\dim(&HL_2(\mathfrak{g},\mathfrak{n}))\leq\dim(HL_2(\f{\g}{\mathfrak{k}},\f{\n}{\mathfrak{k}}))
+\dim(\g\curlywedge\mathfrak{k})\\
&\leq\dim(HL_2(\f{\g}{(\g^2\cap\n)+\mathfrak{k}},
\f{\n}{(\g^2\cap\n)+\mathfrak{k}}))+2(\dim(\g^2\cap\n)-1)
d(\f{\g/\mathfrak{k}}{Z(\g/\mathfrak{k},\n/\mathfrak{k})})+2\dim(\f{\g}{\g^2})\\
&\leq\dim(HL_2(\f{\g}{\g^2\cap\n},\f{\n}{\g^2\cap\n}))
+2\dim(\g^2\cap\n)d(\f{\g}{Z(\g,\n)})
+2(\dim(\f{\g}{\g^2})-d(\f{\g}{Z(\g,\n)})).
\end{alignat*}
Since $\dim(\g/\g^2)=d(\g/Z(\g,\n))$, the result follows in this
case.\\
{\it Case} $2$. $Z(\g,\n)\nsubseteq\g^2$. Suppose that
$\mathfrak{a}$ is a vector space complement of $Z(\g,\n)\cap\g^2$
in $Z(\g,\n)$. Then
$(\g,\n)=(\mathfrak{k}\oplus\mathfrak{a},(\mathfrak{k}\cap\mathfrak{n})\oplus\mathfrak{a})$,
for some ideal $\mathfrak{k}$ of $\g$. It is straightforward to
check that
$\mathfrak{g}^2\cap\mathfrak{n}=\mathfrak{k}^2\cap\mathfrak{n}$
and $\g/Z(\g,\n)=\mathfrak{k}/Z(\mathfrak{k},\mathfrak{k}\cap\n)$.
Putting
$\overline{\mathfrak{k}\cap\n}=(\mathfrak{k}\cap\n)/[\mathfrak{k},\mathfrak{k}\cap\n]$
and $\overline{\mathfrak{k}}=\mathfrak{k}/\mathfrak{k}^2$, we let
$B$ be the quotient of
$((\overline{\mathfrak{k}\cap\n})\ast\mathfrak{a})\oplus(\overline{\mathfrak{k}}\ast\mathfrak{a})$
by the ideal generated by the elements $(\overline{n}\ast
a,-\overline{n}\ast a)$ and
$(a\ast\overline{n},-a\ast\overline{n})$, in which
$n\in\mathfrak{k}\cap\n$ and $a\in\mathfrak{a}$. It follows from
Corollary 3.3 and the induction hypothesis that\vspace{-.15cm}
\begin{alignat*}{1}
\dim(HL_2(&\mathfrak{g},\mathfrak{n}))=\dim(HL_2(\mathfrak{k}\oplus\mathfrak{a},
(\mathfrak{k}\cap\mathfrak{n})\oplus\mathfrak{a}))=
\dim(HL_2(\mathfrak{k},\mathfrak{k}\cap\mathfrak{n}))+\dim(HL_2(\mathfrak{a}))+\dim(B)\\
&\leq\dim(HL_2(\f{\mathfrak{k}}{\mathfrak{k}^2\cap\n},\f{\mathfrak{k}\cap\n}
{\mathfrak{k}^2\cap\n}))+2\dim(\mathfrak{k}^2\cap\n)d(\f{\mathfrak{k}}{Z(\mathfrak{k}
,\mathfrak{k}\cap\n)})
+\dim(HL_2(\mathfrak{a}))+\dim(B)\\
&=\dim(HL_2(\f{\mathfrak{k}}{\mathfrak{k}^2\cap\n},
\f{\mathfrak{k}\cap\n}{\mathfrak{k}^2\cap\n}))+2\dim(\mathfrak{g}^2\cap\n)
d(\f{\mathfrak{g}}{Z(\mathfrak{g},\n)})
+\dim(HL_2(\mathfrak{a}))+\dim(B).
\end{alignat*}
But, repeated application of Corollary 3.3 deduces\vspace{-.15cm}
\begin{alignat*}{1}
\dim(HL_2(\f{\g}{\g^2\cap\n},\f{\n}{\g^2\cap\n}))&
=\dim(HL_2(\f{\mathfrak{k}}{\mathfrak{k}^2\cap\n}\oplus\mathfrak{a},
\f{\mathfrak{k}\cap\n}{\mathfrak{k}^2\cap\n}\oplus\mathfrak{a}))\\
&=\dim(HL_2(\f{\mathfrak{k}}{\mathfrak{k}^2\cap\n},\f{\mathfrak{k}\cap\n}{\mathfrak{k}^2\cap\n}))
+\dim(HL_2(\mathfrak{a}))+\dim(B),
\end{alignat*}
and thus the result holds, as required.
\end{proof}
Taking $\mathfrak{n}=\mathfrak{g}$ in Theorem \ref{theo3.5}, we
get the following
\begin{corollary}\label{coro3.8}
Let $\mathfrak{g}$ be a finite dimensional nilpotent Leibniz
algebra. Then\vspace{-.15cm}
\[\dim(HL_2(\mathfrak{g}))\leq(\dim(\mathfrak{g})-\dim(\mathfrak{g}^2))^2
+2\dim(\mathfrak{g}^2)d(\f{\g}{Z(\g)}).\vspace{-.15cm}\] In
particular, the equality holds if and only if $\g$ is abelian.
\end{corollary}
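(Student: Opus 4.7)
The plan is to specialize Theorem~\ref{theo3.5} to $\n = \g$ and evaluate the resulting bound using the abelian case. With $\n = \g$ we have $\g^2 \cap \n = \g^2$ and $Z(\g,\n) = Z(\g)$; moreover, the long exact sequence~(2) with $\n = \g$ has $HL_k(\g/\n) = 0$ for all $k \geq 1$, which forces $HL_2(\g,\g) \cong HL_2(\g)$, and the same observation applied to the pair $(\g/\g^2, \g/\g^2)$ yields $HL_2(\g/\g^2,\g/\g^2) \cong HL_2(\g/\g^2)$.

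With these identifications in hand, Theorem~\ref{theo3.5} reads $\dim HL_2(\g) \leq \dim HL_2(\g/\g^2) + 2\dim(\g^2)\,d(\g/Z(\g))$. Since $\g/\g^2$ is abelian of dimension $q := \dim\g - \dim\g^2$, Corollary~\ref{coro2.4}(i) applied with $\mathfrak{a}(q) = \n = \g/\g^2$ gives $\dim HL_2(\g/\g^2) = q(2q-q) = (\dim\g - \dim\g^2)^2$, and substituting into the bound above delivers the stated inequality.

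For the equality assertion, the ``if'' direction is immediate: when $\g$ is abelian we have $\g^2 = 0$, so the right-hand side collapses to $(\dim\g)^2$, which by the abelian computation above equals $\dim HL_2(\g)$. The converse is the step I expect to be the main obstacle, and I would attack it by induction on $\dim\g$ while tracing the inequalities inside the proof of Theorem~\ref{theo3.5}. Specifically, if $\g$ is non-abelian then $\g^2 \neq 0$, and nilpotency lets me choose a one-dimensional central ideal $\mathfrak{k} \subseteq \g^2$; applying Proposition~\ref{prop3.6} with $\n = \g$ yields the exact sequence
\[
\g \curlywedge \mathfrak{k} \lo HL_2(\g) \lo HL_2(\g/\mathfrak{k}) \twoheadrightarrow \g^2 \cap \mathfrak{k} = \mathfrak{k},
\]
and combining the inductive estimate on $\g/\mathfrak{k}$ with the dimension of $\g \curlywedge \mathfrak{k}$ (controlled via Lemma~\ref{lem2.1}(i) since $\mathfrak{k}$ is central) should force a strict inequality in the Theorem~\ref{theo3.5} bound for $\g$.
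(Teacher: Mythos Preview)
Your derivation of the inequality is correct and matches the paper exactly: the paper simply writes ``Taking $\n=\g$ in Theorem~\ref{theo3.5}'' and states the corollary without further argument. Your identification $HL_2(\g,\g)\cong HL_2(\g)$ via the long exact sequence and the abelian computation $\dim HL_2(\g/\g^2)=(\dim\g-\dim\g^2)^2$ are both correct, and the ``if'' direction of the equality claim is immediate as you say.

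For the ``only if'' direction the paper offers no justification at all, so you are already going beyond it. Your sketch, however, does not quite close. If you pick a one-dimensional $\mathfrak{k}\subseteq Z(\g)\cap\g^2$ and combine Proposition~\ref{prop3.6} with the inductive bound on $\g/\mathfrak{k}$, together with $\dim(\g\curlywedge\mathfrak{k})=2\dim(\g/\g^2)$ and $d\bigl((\g/\mathfrak{k})/Z(\g/\mathfrak{k})\bigr)\le d(\g/Z(\g))$, you obtain
\[
\dim HL_2(\g)\le (\dim\g-\dim\g^2)^2+2\dim(\g^2)\,d(\g/Z(\g))+2\bigl(\dim(\g/\g^2)-d(\g/Z(\g))\bigr)-1,
\]
and this is strictly below the stated bound only when $\dim(\g/\g^2)=d(\g/Z(\g))$, i.e.\ when $Z(\g)\subseteq\g^2$. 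For $\g=H_1\oplus\mathfrak{a}(1)$, for instance, the excess term equals $+1$ and no strict inequality follows from your estimate. To finish you must also run Case~2 of the proof of Theorem~\ref{theo3.5}: when $Z(\g)\not\subseteq\g^2$, split $\g=\mathfrak{h}\oplus\mathfrak{a}$ with $\mathfrak{a}\neq 0$ abelian central and $\mathfrak{h}^2=\g^2\neq 0$, apply the K\"unneth formula~(\ref{eq1}), and check directly that the gap between the bound and $\dim HL_2$ is the same for $\g$ as for $\mathfrak{h}$; since $\dim\mathfrak{h}<\dim\g$ and $\mathfrak{h}$ is non-abelian, induction then gives the strict inequality.
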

\section{The main results}
 Let $(\g,\n)$ be a pair of Leibniz algebras such that
$\dim(\n)=n$ and $\dim(\g/\n)=m$. In our recent paper
\cite{edalatzadeh3}, we obtained an upper bound $n(n+2m)$ for the
dimension of $HL_2(\g,\n)$. We also determined the structure of
all pairs of finite dimensional nilpotent Leibniz algebras with
$\dim(HL_2(\g,\n))=n(n+2m)-k$, for which $k=0,1,2$.
\begin{theorem}[\cite{edalatzadeh3}]\label{theo3.9}
With the above assumptions and notations, if $\g$ is nilpotent,
then

$(i)$ $\dim(HL_2(\mathfrak{g},\mathfrak{n}))=n(n+2m)$ if and only
if $\mathfrak{g}$ is abelian.

$(ii)$ $\dim(HL_2(\mathfrak{g},\mathfrak{n}))=n(n+2m)-1$ if and
only if
$(\g,\n)=(\mathfrak{e}\oplus\mathfrak{a}(q),\mathfrak{e}^2)$, for
some $q\geq0$.

$(iii)$ $\dim(HL_2(\mathfrak{g},\mathfrak{n}))=n(n+2m)-2$ if and
only if $\g=\mathfrak{m}\oplus\n$, where $\mathfrak{m}$ is a
nilpotent Leibniz algebra with $\dim(\mathfrak{m}^2)=1$ and $\n$
is a one-dimensional central ideal.
\end{theorem}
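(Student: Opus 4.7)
Write $D := n(n+2m) - \dim(HL_2(\g,\n))$ for the deficit against the maximal bound (attained for abelian $\g$ by Corollary \ref{coro2.4}(i)). The plan is to combine Theorem \ref{theo3.5}, which converts a bound on $D$ into arithmetic constraints on $s := \dim(\g^2 \cap \n)$ and $t := \dim(\g^2)$, with Corollary \ref{coro3.3} and the structure result Proposition \ref{prop2.3}(ii). The sufficiency directions are direct applications of Corollary \ref{coro3.3}: (i) is immediate from Corollary \ref{coro2.4}(i); for (ii), decomposing the pair as $(\mathfrak{e},\mathfrak{e}^2) \oplus (\mathfrak{a}(q),0)$ and combining Corollary \ref{coro2.4}(ii) with Lemma \ref{lem2.1}(i) for the cross term yields $\dim(HL_2) = 2(\dim\mathfrak{e}-1) + 2q$, matching $n(n+2m)-1$; (iii) is analogous with the pair $(\mathfrak{m},0) \oplus (\n,\n)$, where the cross term is $\mathfrak{m}^{ab} \ast \n$ of dimension $2(\dim\mathfrak{m}-1)$.

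For necessity, Theorem \ref{theo3.5} together with the upper bound $\dim(HL_2(\g/(\g^2\cap\n),\n/(\g^2\cap\n))) \leq (n-s)(n-s+2m)$ and $d(\g/Z(\g,\n)) \leq \dim(\g/\g^2) = n+m-t$ yields, after cancellation, $D \geq s(2t-s) \geq s^2$. Hence $D \leq 2$ forces $s \in \{0,1\}$. If $s = 0$ then $\n$ is central (since $[\g,\n] \subseteq \g^2 \cap \n = 0$) and admits an ideal complement $V \subseteq \g$, obtained by lifting any vector-space complement of the image of $\n$ in $\g/\g^2$ and adjoining $\g^2$. Proposition \ref{prop2.2}(iii) combined with the K\"unneth formula (\ref{eq1}) then produces the sharp identity $\dim(HL_2(\g,\n)) = n(n+2m) - 2nt$, so $D = 2nt$: this forces $t = 0$ (hence $\g$ abelian) when $D = 0$, is impossible when $D = 1$ by parity, and forces $n = t = 1$ when $D = 2$, giving exactly case (iii) with $\mathfrak{m} = V$.

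If $s = 1$, the bound $D \geq 2t - 1$ forces $t = 1$, so Proposition \ref{prop2.3}(ii) gives $\g = \mathfrak{e} \oplus \mathfrak{a}(q)$ with $\mathfrak{e}^2 = \g^2 \subseteq \n$. Using the exact sequence of Lemma \ref{lem2.1}(ii) with $\mathfrak{k} = \mathfrak{e}^2$ together with the fact that in an extra special algebra $[\mathfrak{e},y] = \mathfrak{e}^2$ for every $y \in \mathfrak{e} \setminus \mathfrak{e}^2$, one shows that any element of $\n$ with a non-central $\mathfrak{e}$-component inflates the commutator image enough to push the deficit strictly beyond $2$. Hence $\n = \mathfrak{e}^2 \oplus V$ for some $V \subseteq \mathfrak{a}(q)$, and Corollary \ref{coro3.3} applied to $(\mathfrak{e},\mathfrak{e}^2) \oplus (\mathfrak{a}(q),V)$ gives $D = 1 + 2\dim V$, pinpointing $V = 0$ for case (ii) and excluding $D = 2$ from this branch entirely. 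The main obstacle is this last splitting step: verifying that non-split ideals always inflate the deficit requires careful bookkeeping through the sequence of Lemma \ref{lem2.1}(ii) and precise control of the cokernel of the commutator map together with the cross term $\mathfrak{a}$ in Corollary \ref{coro3.3}; everything else is essentially the arithmetic above.
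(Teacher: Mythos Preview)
The paper does not prove Theorem \ref{theo3.9}: it is quoted from \cite{edalatzadeh3} and used as input for the classification at deficit $3$ (Theorem \ref{theo3.10}). So there is no proof here to compare your proposal against.

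On its own merits, your strategy is largely sound. The sufficiency computations are correct, and the reduction via Theorem \ref{theo3.5} to the inequality $D \geq s(2t-s) \geq s^2$ is clean and does force $s \in \{0,1\}$. The $s=0$ branch is handled completely: the splitting $\g = V \oplus \n$ with $\g^2 \subseteq V$, followed by Proposition \ref{prop2.2}(iii) and the K\"unneth formula \eqref{eq1}, gives the exact identity $D = 2nt$, which disposes of $D=0,1,2$ in that branch as you describe. Likewise, once you know $\n = \e^2 \oplus V$ with $V \subseteq \mathfrak a(q)$ in the $s=1$ branch, Proposition \ref{coro2.5}(i) yields $D = 1 + 2\dim V$, pinning down case (ii) and excluding $D=2$.

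The genuine gap is the step you yourself flag as the main obstacle: showing that when $s=t=1$ and $\n \not\subseteq Z(\g)$, the deficit satisfies $D \geq 3$. Your bound from Theorem \ref{theo3.5} gives only $D \geq 1$ here, and feeding $\mathfrak k = \e^2$ into Proposition \ref{prop3.6} yields only $D \geq 2$, since the resulting inequality $\dim(HL_2(\g,\n)) \leq \dim(\g \cw \g^2) + \dim(HL_2(\g/\g^2,\n/\g^2)) - 1 = 2(n+m-1) + (n-1)(n-1+2m) - 1 = n(n+2m) - 2$ is not strict without further input. To push to $D \geq 3$ you must show that the map $\g \cw \g^2 \to \g \cw \n$ has nontrivial kernel whenever $\n$ contains an element with non-central $\e$-component; this is exactly the kind of generator-level vanishing argument carried out in the proof of Theorem \ref{theo3.10} (identifying explicit linear dependencies among the $e_i \cw e_j$, $e_i \cw f_l$, $f_l \cw e_i$ via relations (3a)--(3d)). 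Your appeal to Lemma \ref{lem2.1}(ii) and ``careful bookkeeping'' does not yet supply this, so the proof is incomplete as written.
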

In the following theorem, we characterize all pairs of finite
dimensional nilpotent Leibniz algebras with
$\dim(HL_2(\g,\n))=n(n+2m)-3$.
\begin{theorem}\label{theo3.10}
With the above assumptions and notations,
$\dim(HL_2(\g,\n))=n(n+2m)-3$ if and only if one of the following
cases occurs.

$(a)$ $\g$ is nilpotent with $\dim(\g^2)=2$ and $\n$ is a
one-dimensional ideal of $Z(\g)\cap\g^2$.

$(b)$ $\g$ is nilpotent with $\dim(\g^2)=1$ and $\n$ is a
two-dimensional ideal of $Z(\g)$ containing $\g^2$.
\end{theorem}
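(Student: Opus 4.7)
The plan is to establish the theorem in two directions: sufficiency by explicit dimension computation and necessity via Theorem 3.5 followed by a case analysis.

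For sufficiency, I would apply Proposition 2.2$(ii)$ in each case. In case $(a)$, the condition $\n\subseteq\g^2$ makes $\bar n=0$ for every $n\in\n$, so the specialised form $HL_2(\g,\n)\cong\g^{ab}\ast\n$ holds; combined with Lemma 2.1$(i)$ (using centrality of $\n$), its dimension is $2\dim(\g^{ab})\cdot\dim(\n)=2(m-1)=n(n+2m)-3$. In case $(b)$, the relation subspace in $\g^{ab}\ast\n$ (the image of $\g\square\n$) is spanned, for any basis $\{z,n_0\}$ of $\n$ with $z$ generating $\g^2$, by three linearly independent elements $z\otimes\bar{n_0}$, $\bar{n_0}\otimes z$ and $\bar{n_0}\otimes n_0-n_0\otimes\bar{n_0}$; this yields $\dim HL_2(\g,\n)=4(m+1)-3=n(n+2m)-3$.

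For necessity, set $s=\dim(\g^2\cap\n)$, $c=\dim\g^2$, and $d=d(\g/Z(\g,\n))$, and form $(\tilde\g,\tilde\n)=(\g/(\g^2\cap\n),\n/(\g^2\cap\n))$. Then $\tilde\n\subseteq Z(\tilde\g)$ and $\tilde\n\cap\tilde\g^2=0$, so $\tilde\n$ admits an ideal complement $W\supseteq\tilde\g^2$ in $\tilde\g$; Corollary 3.3 applied to $\tilde\g=\tilde\n\oplus W$ (together with $HL_2(W,0)=0$) yields $\dim HL_2(\tilde\g,\tilde\n)=(n-s)(n+s+2m-2c)$. Substituting into Theorem 3.5 and simplifying converts the hypothesis into the key inequality
\[
2(n-s)c+2ms+s^2-2sd\leq 3.
\]

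A case analysis on $s$ completes the argument. When $s=0$, the inequality forces $n=c=1$, but a direct computation via the decomposition $\g=\n\oplus\mathfrak{k}$ and Corollary 3.3 gives $\dim HL_2(\g,\n)=2m-1$, i.e.\ distance exactly two, excluded by the hypothesis. When $s=n$ (so $\n\subseteq\g^2$), Proposition 2.2$(ii)$ specialises and the numerical identity $n(2c-n)=3$ forces $n=1,c=2$, namely case $(a)$; the alternative $c=1$ gives distance one by Theorem 3.9$(ii)$. When $0<s<n$, the inequality pins down $(n,c,s)=(2,1,1)$ and $d\geq m$; the central sub-case $\n\subseteq Z(\g)$ produces exactly case $(b)$. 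The main obstacle I anticipate is the non-central sub-case ($Z(\g,\n)=\g^2\subsetneq\n$), where the inequality alone admits distance three. I expect to eliminate it by combining Proposition 3.6 applied to the central ideal $\mathfrak{k}=\g^2$ (using $\dim(\g\curlywedge\mathfrak{k})=2(m+1)$ from Lemma 2.1$(i)$, and $\dim HL_2(\g/\mathfrak{k},\n/\mathfrak{k})=2m+1$ from abelianity of $\g/\mathfrak{k}$) with Theorem 3.9's characterisation of distance-two pairs, yielding the sharp bound $\dim HL_2(\g,\n)\leq 4m$ in this sub-case and completing the dichotomy.
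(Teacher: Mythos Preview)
Your route is genuinely different from the paper's. The paper proves necessity by working directly with a generating set $B=\{e_i\curlywedge e_j,\,e_i\curlywedge f_l,\,f_l\curlywedge e_i\}$ of $\g\curlywedge\n$: a long Step~1 shows that $\n$ must be central (by counting forced relations in $B$, invoking the decomposition $\g=\e\oplus\mathfrak{a}(q)$, passing to the Heisenberg form when $\g$ is Lie, and using centraliser--dimension arguments), and Step~2 then bounds $\dim\n$ and $\dim\g^2$ once centrality is known. You instead feed everything through the bound of Theorem~3.5, reducing to the numerical inequality $2c(n-s)+2ms+s^2-2sd\leq 3$ and splitting on $s$. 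For $s=0$ and $s=n$ your reductions are clean and correct (in the latter, note that $n=1$ is what makes the appeal to Proposition~2.2$(ii)$ legitimate, since a one--dimensional ideal of a nilpotent algebra is automatically central).

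There is, however, a real gap in the sub-case $0<s<n$. The inequality (using $c\geq s\geq 1$) does force $s=c=1$, and since then $\g^2\subseteq Z(\g,\n)$ one has $d=n+m-\dim Z(\g,\n)$, so the inequality reads $\dim Z(\g,\n)\leq 2$. But this does \emph{not} pin down $n=2$: it gives $n=2$ only once $\n$ is already known to be central (for then $Z(\g,\n)=\n$). The non-central sub-case therefore covers all $n\geq 2$ with $\n\not\subseteq Z(\g)$, and your proposed elimination does not close it. Proposition~3.6 with $\mathfrak{k}=\g^2$ yields only
\[
\dim HL_2(\g,\n)\;\leq\;\dim(\g\curlywedge\g^2)+\dim HL_2\bigl(\g/\g^2,\n/\g^2\bigr)-\dim\bigl([\g,\n]\cap\g^2\bigr)\;=\;n(n+2m)-2,
\]
and appealing to Theorem~3.9 only tells you the distance is not $0,1,2$ (those cases require $\g$ abelian, or $\n$ one--dimensional, or $\n$ a one--dimensional central direct summand), hence distance $\geq 3$; together these give $\dim HL_2(\g,\n)\leq n(n+2m)-3$, not the strict inequality you need. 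Your claimed sharp bound $4m$ for $n=2$ does not follow from the ingredients you list. This is precisely where the paper invests most of its work: its Step~1 eliminates the non-central possibility by a delicate structural case analysis (abelian versus non-abelian $\n$, Lie versus non-Lie $\g$, dimensions of $C_\g(a)\cap C_\g(b)$) rather than by any single inequality, and that analysis is not reproduced by your Theorem~3.5 approach.
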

To prove this, we use the following proposition.
\begin{proposition}\label{coro2.5}
    Let $\g=\e\oplus\mathfrak{a}(q)$ and $\n$ be an ideal of $\g$. Then

    $(i)$ If $\e^2\subseteq\n\subseteq Z(\g)$, then
    $\dim(HL_2(\g,\n))=2\dim(\g)\dim(\n)-(\dim(\n)+1)^2+2$.

    $(ii) $If $\e=J_1, J_2$ or $H_1$ and $\dim(\n)=2$, then \vspace{.25cm}\\
    \vspace{1cm}~\hspace{2.5cm}$\dim(HL_2(J_1\oplus\mathfrak{a}(q),\n))=
    \begin{cases}
    4q & {\rm if}~
    \n\subseteq\mathfrak{a}(q),\\
    4q+1 & {\rm if}~ J_1^2\subseteq\n\subseteq Z(\g),\\
    2q+1 & {\rm otherwise}.
    \end{cases}$\vspace{-.9cm}\\
    \vspace{-.1cm}~\hspace{2.5cm}$\dim(HL_2(J_2\oplus\mathfrak{a}(q),\n))=
    \begin{cases}
    4(q+1) & {\rm if}~
    \n\subseteq\mathfrak{a}(q),\\
    4q+5 & {\rm if}~ J_2^2\subseteq\n\subseteq Z(\g),\\
    2q+3 & {\rm otherwise}.
    \end{cases}$\vspace{.2cm}\\
    \vspace{-.1cm}~\hspace{2.4cm}$\dim(HL_2(H_1\oplus\mathfrak{a}(q),\n))=
    \begin{cases}
    4(q+1) & {\rm if}~
    \n\subseteq\mathfrak{a}(q),\\
    4q+5 & {\rm if}~ H_1^2\subseteq\n\subseteq Z(\g),\\
    2(q+2) & {\rm otherwise}.
    \end{cases}$\vspace{-.cm}\\
\end{proposition}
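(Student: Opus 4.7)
My plan is to derive both parts of the proposition from Corollary \ref{coro3.3} (the K\"unneth--Loday formula for the relative homology) by splitting the pair $(\g,\n)$ into a direct sum of smaller pairs in each case and then reading off each dimension with the help of Corollary \ref{coro2.4}, Lemma \ref{lem2.1}, and Proposition \ref{prop2.3}.

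For part $(i)$, the hypotheses $\e^2\subseteq\n\subseteq Z(\g)=\e^2\oplus\mathfrak{a}(q)$ force the splitting $\n=\e^2\oplus\mathfrak{b}$, where $\mathfrak{b}:=\n\cap\mathfrak{a}(q)$ has dimension $\dim\n-1$; hence $(\g,\n)=(\e,\e^2)\oplus(\mathfrak{a}(q),\mathfrak{b})$ as pairs. Corollary \ref{coro3.3} then writes $HL_2(\g,\n)$ as $HL_2(\e,\e^2)\oplus HL_2(\mathfrak{a}(q),\mathfrak{b})$ plus a ``mixing'' term, and I would use Corollary \ref{coro2.4}$(ii)$ to get $\dim HL_2(\e,\e^2)=2(\dim\e-1)$ and Corollary \ref{coro2.4}$(i)$ for $\dim HL_2(\mathfrak{a}(q),\mathfrak{b})=\dim\mathfrak{b}(2q-\dim\mathfrak{b})$. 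Because $\overline{\e^2}=\e^2$, $\overline{\mathfrak{a}(q)}=\mathfrak{a}(q)$, $\overline{\e}=\e^{ab}$ and $\overline{\mathfrak{b}}=\mathfrak{b}$ are all abelian with trivial mutual action, Lemma \ref{lem2.1}$(i)$ turns each non-abelian tensor product into an ordinary tensor product; and since every $n_1\in\e^2$ has zero image in $\e^{ab}$, the defining generators of $\mathfrak{a}$ collapse to contribute exactly $2\dim\mathfrak{b}$ dimensions. Collecting everything and substituting $\dim\n=\dim\mathfrak{b}+1$ and $\dim\g=\dim\e+q$ produces the asserted $2\dim(\g)\dim(\n)-(\dim\n+1)^2+2$.

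For part $(ii)$ I treat the three sub-cases separately. If $\n\subseteq\mathfrak{a}(q)$, pick a vector-space complement $\mathfrak{c}$ of $\n$ in $\mathfrak{a}(q)$; then $(\g,\n)\cong(\e\oplus\mathfrak{c},0)\oplus(\n,\n)$, so Corollary \ref{coro3.3} immediately gives $\dim HL_2(\g,\n)=\dim HL_2(\n)+4\dim(\e^{ab}\oplus\mathfrak{c})=4+4(\dim\e+q-3)$, which specializes to $4q$ for $J_1$ and $4(q+1)$ for $J_2,H_1$. If $\e^2\subseteq\n\subseteq Z(\g)$, substitute $\dim\n=2$ into part $(i)$ to obtain $4(\dim\e+q)-7$, namely $4q+1$ or $4q+5$. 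For the ``otherwise'' case I would first show that $\n$ must be non-central \emph{and} contain $\e^2$: any $v\in\n$ with non-trivial $x$- or $y$-component in $\e$ has, by direct bracket computation in each of $J_1,J_2,H_1$, either $[\g,v]=\e^2$ or $[v,\g]=\e^2$. Next, using the automorphism group of $\e\oplus\mathfrak{a}(q)$ (dilations of the generators of $\e$, the $x\leftrightarrow y$ swap available only for $H_1$, and additive shifts by $\mathfrak{a}(q)$), one reduces the pair to $(\e\oplus\mathfrak{a}(q),\n')$ with $\n'$ a two-dimensional ideal of $\e$. Corollary \ref{coro3.3} applied to $(\e,\n')\oplus(\mathfrak{a}(q),0)$ then produces
\[
\dim HL_2(\g,\n)=\dim HL_2(\e,\n')+2q\dim\bigl(\n'/[\e,\n']\bigr),
\]
and Corollary \ref{coro2.4}$(iii)$ together with $\dim HL_2(J_1)=1$ from Proposition \ref{prop2.3} supplies $\dim HL_2(\e,\n')=1,3,4$ for $\e=J_1,J_2,H_1$ respectively.

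The delicate point, and thus the main obstacle, is the ``otherwise'' case for $\e=J_2$. Because $J_2$ admits no automorphism swapping $x$ and $y$, the reduction above leaves several non-isomorphic canonical forms for $\n'$, such as $\langle z,x\rangle$, $\langle z,y\rangle$, or $\langle z,x+y\rangle$, and one must verify that each of them yields $\dim(\n'/[J_2,\n'])=1$. This hinges on interpreting $[\g,\n]$ as the \emph{two-sided} commutator $\langle[g,n],[n,g]\mid g\in\g,n\in\n\rangle$: even though $[J_2,x]=0$ on the left, the right-hand brackets satisfy $[x,J_2]=\langle z\rangle$, so $[J_2,\n']$ always equals $\langle z\rangle$ and the quotient is one-dimensional in every representative. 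Once this subtlety is handled the three sub-cases merge into the single formula $2q+3$, and the analogous computations for $H_1$ and $J_1$ are strictly easier because their automorphism groups bring every non-central two-dimensional ideal into a single canonical form, yielding $2(q+2)$ and $2q+1$ respectively.
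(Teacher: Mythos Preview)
Your argument for part $(i)$ and for the first two sub-cases of part $(ii)$ is essentially the paper's: split $(\g,\n)$ along the direct sum and apply Corollary~\ref{coro3.3} together with Corollary~\ref{coro2.4} and Lemma~\ref{lem2.1}$(i)$. The real difference is in the ``otherwise'' sub-case. You normalise $\n$ by an automorphism of $\g$ into a two-dimensional ideal $\n'\subseteq\e$ and then apply Corollary~\ref{coro3.3} to $(\e,\n')\oplus(\mathfrak{a}(q),0)$, finishing with Corollary~\ref{coro2.4}$(iii)$; this is what forces the canonical-form discussion for $J_2$ that you rightly identify as the delicate point. The paper takes a shorter route: writing $\n=\langle z,\,c_1x+c_2y+t\rangle$ with $t\in\mathfrak{a}(q)$ and (say) $c_1\neq0$, it observes that $\n$ admits an \emph{abelian} complement $\mathfrak{k}$ in $\g$ (for $H_1$ one may take $\mathfrak{k}=\langle y\rangle\oplus\mathfrak{a}(q)$), and then Proposition~\ref{prop2.2}$(iii)$ together with Proposition~\ref{prop2.3} give $\dim HL_2(\g,\n)=\dim HL_2(\g)-\dim HL_2(\mathfrak{k})$ in one stroke, with no automorphism bookkeeping and no separate treatment of the three choices of $\e$. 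Your method is valid and makes the K\"unneth contribution of each factor explicit, but the complement trick is the more economical device and sidesteps precisely the obstacle you singled out.
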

\begin{proof}
$(i)$ Evidently, $\n$ can be represented as the direct sum
$\e^2\oplus\mathfrak{i}$, where $\mathfrak{i}$ is an ideal of
$\mathfrak{a}(q)$. Hence, applying Corollary 3.3, there is an
isomorphism \vspace{-.2cm}
\[HL_2(\g,\n)\cong HL_2(\e,\e^2)\oplus HL_2(\mathfrak{a}(q),\mathfrak{i})
\oplus(\e^{ab}\ast\mathfrak{i})\oplus\ds\f{\e^2\ast\mathfrak{a}(q)}{\langle
x\ast y,y\ast x~|~x\in \e^2,
y\in\mathfrak{i}\rangle}.\vspace{-.1cm}\] Since
$\e^2\ast\mathfrak{a}(q)$ is abelian, the denominator of the above
factor is isomorphic to $\e^2\ast\mathfrak{i}$. Now, by
considering the split exact sequence of abelian Leibniz algebras
$\e^2\ast\mathfrak{i}\rightarrowtail\e^2\ast\mathfrak{a}(q)\twoheadrightarrow
\e^2\ast\mathfrak{a}(q)/\mathfrak{i}$, and using Corollary
\ref{coro2.4}$(i)$,$(ii)$, we get the required result.

$(ii)$ We only prove the case $\e=H_1$, the other two are verified
similarly. If $\n\subseteq H_1$ or $\mathfrak{a}(q)$, then
invoking Corollaries 3.3 and \ref{coro2.4}$(ii)$, respectively, we
have\vspace{-.2cm}
$$\dim(HL_2(\g,\n))=\dim(HL_2(H_1,\n))+\dim(HL_2(
\mathfrak{a}(q),0))+\dim(\f{\n}{[H_1,\n]}\ast\mathfrak{a}(q))=2(q+2),\vspace{-.2cm}$$
$$\hspace{-1cm}\dim(HL_2(\g,\n))=\dim(HL_2(H_1,0))+\dim(HL_2(\mathfrak{a}(q),\n))+
\dim(H_1^{ab}\ast\n)=4(q+1).\vspace{-.2cm}$$ Also, if
$H_1^2\subseteq\n\subseteq Z(\g)$, then part $(i)$ implies that
$\dim(HL_2(\g,\n))=4q+5$. Finally, we assume $\n\nsubseteq Z(\g)$
and $\n=\langle z,c_1x+c_2y+t\rangle$, where
$t\in\mathfrak{a}(q)$, $c_1,c_2\in\F$ and at least one of the
scalars, say $c_1$, is non-zero. If we set $\mathfrak{k}$ to be
the semidirect sum of $\mathfrak{a}(q)$ by $\langle x\rangle$, it
is readily checked that $\mathfrak{k}$ is an abelian complement of
$\n$ in $\g$ and so, by Propositions \ref{prop2.2}$(iii)$ and
\ref{prop2.3},
$\dim(HL_2(\g,\n))=\dim(HL_2(\g))-\dim(HL_2(\g/\n))=2(q+2).$
\end{proof}
Now, we equipped to prove the theorem.
\begin{proof}
Assume $\dim(HL_2(\g,\n))=n(n+2m)-3$. By Theorem
\ref{theo3.9}$(i)$, $\g$ is non-abelian and by the nilpotency of
$\g$, $\n\cap Z(\g)\neq0$. Pick non-zero elements $x\in\g^2$ and
$e_1\in\n\cap Z(\g)$, $e_2,\cdots,e_n\in\n$ and
$f_1,\cdots,f_m\in\g-\n$ such that the set $A=\{e_1,\cdots,
e_n,f_1,\cdots,f_m\}$ is a basis of $\g$ and $x\in A$. Using the
relations $(2a)$-$(2d)$, it is routine to check that
$B=\{e_i\curlywedge e_j, e_i\curlywedge f_l, f_l\curlywedge
e_i~|~1\leq i,j\leq n,
 1\leq l\leq m\}$
is a generating set for the Leibniz algebra $\g\curlywedge\n$. We
divide the rest of the proof into two steps.

{\it Step} $1$. We claim that $\n$ is central.\\
By way of contradiction, suppose that $\dim([\g,\n])\geq1$. Then
$\dim(\g\curlywedge\n)\geq n(n+2m)-2$. If $\dim(Z(\g)\cap\n)\geq2$
and assume that $e_i\in Z(\g)\cap\n$ for some $2\leq i\leq n$,
then using the formulas $(3b)$ and $(3d)$, one can easily see that
$e_1\curlywedge x=x\curlywedge e_1=x\curlywedge e_i=e_i\curlywedge
x=0_{\g\curlywedge\n}$, which means that
$\dim(\g\curlywedge\n)\leq n(n+2m)-3$, an impossibility. Hence,
$Z(\g)\cap\n$ and similarly, $\g^2$ must be of dimension $1$. It
follows, in particular, that $Z(\g)\cap\n=$span$\{e_1\}$ and
$\g^2=$span$\{x\}$. Since $0\neq [\g,\n]\subseteq \n\cap\g^2$ and
$\g$ is nilpotent, we conclude that  $\g^2\cap Z(\g)\cap\n\neq0$.
Then $\g^2=Z(\g)\cap\n=[\g,\n]=$span$\{e_1\}$ and $e_1\curlywedge
e_1=0_{\g\curlywedge\n}$. Also, according to Proposition
\ref{prop2.3}$(ii)$, $\g=\mathfrak{e}\oplus\mathfrak{a}(q)$, for
some $q\geq0$. First, assume that $\n$ is non-abelian. Then there
exist elements $a,b\in\n$ (which we can assume that $a,b\in A$)
such that $e_1=[a,b]$. It is readily verified that if $a=b$, then
$y\curlywedge e_1=y\curlywedge[a,a]=0_{\g\curlywedge\n}$ for all
$y\in\g$ and if $a\neq b$, then the sets $\{a\curlywedge e_1,
e_1\curlywedge a, e_1\curlywedge b\}$ and $\{b\curlywedge e_1,
e_1\curlywedge b, e_1\curlywedge a\}$ are linearly dependent. But
both of these modes are contradictory to the assumption that
$card(B)\geq n(n+2m)-2$. So, $\n$ must be abelian. In particular,
$\n\neq\g$. We claim that $\g$ is a Lie algebra. Suppose that
$e_1=[a,a]$ for some $a\in\g-\n$, note that we can assume $a\in
A$. Since $\dim(\n)>1$ we can employ $e_2$ and relation $(3a)$
implies $e_2\curlywedge[a,a]=0$. Also using $(3c)$ the set
$\{e_1\cw e_2,e_1\cw a,a\cw e_1\}$ is linearly dependent, which
gives the impossibility  $\dim(\g\cw \n)\leq n(n+2m)-2$.
 This proves the claim and then
$\g=H(k)\oplus\mathfrak{a}(q)$ for some $k\geq1$ and $q\geq0$,
where $H(k)$ is Heisenberg Lie algebra of dimension $2k+1$, see
\cite{Edalatzadeh}. Recalling that $[\g,\n]=$span$\{e_1\}$, we can
assume that $e_1=[a,b]$ for some $a\in\g$ and $b\in\n$. Then image
of the adjoint map $ad_a:\g\lo\g$ is of dimension $1$ and
$\dim(C_{\g}(a))=\dim(\ker ad_a)=\dim(\g)-1$. Analogously,
$\dim(C_{\g}(b))=\dim(\g)-1$ and consequently, $\dim(C_{\g}(a)\cap
C_{\g}(b))=\dim(\g)-2$. We now distinguish the following two
cases.

Case $1$. $C_{\g}(a)\cap C_{\g}(b)$ is abelian. Since
$\g=(C_{\g}(a)\cap C_{\g}(b))\dot{+}span\{a,b\}$, it is inferred
that $Z(\g)= C_{\g}(a) \cap C_{\g}(b)$ and in consequence,
$1+q=2k+q+1-2$. Then $\g=H(1)\oplus\mathfrak{a}(q)$ and
$\dim(\n+Z(\g))=\dim(\n)+\dim(Z(\g))-\dim(Z(\g)\cap\n)\leq\dim(\g)=q+3$,
or equivalently, $2\leq\dim(\n)\leq3$. If $\dim(\n)=2$, then the
non-centrality of $\n$ together with Proposition
\ref{coro2.5}$(ii)$ yields that $\dim(H_2(\g,\n))=2(q+2)$, which
contradicts our assumption. If $\dim(\n)=3$, then $\g=\n+Z(\g)$
and since $\n$ is abelian, we conclude that $\g$ is abelian, which
is again a contradiction.

Case $2$. $C_{\g}(a)\cap C_{\g}(b)$ is non-abelian. Then $e_1 =
[y,z]$ for some $y,z\in C_{\g}(a)\cap C_{\g}(b)$ and so,
$e_1\curlywedge b=b\curlywedge e_1=0_{\g\curlywedge\n}$. As we can
assume that $b\in A$, the last results contradicts the assumption
that $\dim(\g\curlywedge\n)\geq n(n+2m)-2$. We therefore conclude
that $\n$ is central

{\it Step $2$.} Completion of the proof.\\
Since $\n\subseteq Z(\g)$ , it follows that
$\dim(\g\curlywedge\n)=n(n+2m)-3$ and $n\curlywedge y=y\curlywedge
n=0_{\g\curlywedge\n}$ for all $n\in\n$, $y\in\g^2$. If
$\n\cap\g^2$ is trivial, one sees that $\dim(\n)=\dim(\g^2)=1$,
forcing $\g=\mathfrak{m}\oplus\n$, where $\mathfrak{m}$ is
nilpotent with $\dim(\mathfrak{m}^2)=1$ and $\n$ is a
one-dimensional central ideal of $\g$. But this contradicts
Theorem \ref{theo3.9}$(iii)$. Hence, assume that
$\n\cap\g^2\neq0$. If $\dim(\n)\geq3$ or $\dim(\g^2)\geq3$, then
we again observe that at least four elements of the generating set
$B$ can be zero. So, one of the following situations occurs.

$(i)$ $\dim(\n)=\dim(\g^2)=1$. Then by Proposition
\ref{prop2.3}$(ii)$, $\g=\e\oplus\mathfrak{a}(q)$ for some
$q\geq0$ and $\n=\e^2$, which gives a contradiction to Theorem
\ref{theo3.9}$(ii)$.

$(ii)$ $\dim(\n)=1$ and $\dim(\g^2)=2$. In this case, $\g$ is
nilpotent with the derived subalgebra of dimension $2$ and
$\n\subseteq Z(\g)\cap\g^2$.

$(iii)$ $\dim(\n)=2$ and $\dim(\g^2)=1$. In this case, $\g$ is
nilpotent with the derived subalgebra of dimension $1$ and
$\g^2\subseteq\n\subseteq Z(\g)$.

$(iii)$ $\dim(\n)=\dim(\g^2)=2$. In this case, as before, we
regard that at least four elements in the set $B$ are vanished,
which contradicts the fact that $\dim(\g\curlywedge\n)=n(n+2m)-3$.

The converse of theorem follows from Propositions
\ref{prop2.2}$(ii)$ and \ref{coro2.5}$(i)$.
\end{proof}

 Let $(\g,\n)$ be a pair of Leibniz algebras. A Leibniz
crossed module $\delta:\m\lo\g$ is called a {\it relative stem
cover} of $(\g,\n)$ if $\delta(\m)=\n$, $\ker\delta\cong
HL_2(\g,\n)$ and $\ker\delta\subseteq Z(\g,\m)\cap[\g,\m]$,
where\vspace{-.2cm}
\[Z(\g,\m)=\{ m\in\m~|~^xm=m^x=0, {\rm for~all}~ x\in\g\} ~~ {\rm and}~~
[\g,\m]=\langle~^xm,m^x~|~x\in \g,m\in\m\rangle.\vspace{-.2cm}\]
One observes that the relative stem cover $\delta:\m\lo\g$ of the
pair $(\g,\g)$ is the usual stem cover of $\g$, which was
introduced by Casas and Ladra \cite{Casas2}. In this case,
$x\in\g$ acts on $m\in\m$ by $~^xm =[\bar x,m]~,~m^x=[m,\bar x]$,
where $\bar x$ is any element in the pre-image of $x$ via
$\delta$. They also showed that Leibniz algebras have at least one
stem cover. In \cite{edalatzadeh3}, the authors generalized this
result to an arbitrary pair of Leibniz algebras.

In continuation of the section, we will construct a relative stem
cover for the direct sum of two pairs of Leibniz algebras in terms
of given relative stem covers of them, which is a generalization
of \cite[Corollary 5.6]{RS} for Leibniz algebras. To do this, we
need the following lemma.
\begin{lemma}\label{lem3.1}
Let $\delta_i:\mathfrak{m_i}\lo\mathfrak{g_i}$, $i=1,2$, be two
Leibniz crossed modules and $\mathfrak{a}$ be an ideal of
$(\overline{\mathfrak{m}}_1\ast\overline{\mathfrak{g}}_2)
\oplus(\overline{\mathfrak{g}}_1\ast\overline{\mathfrak{m}}_2)$
generated by
$(\overline{m}_1\ast\overline{\delta_2(m_2)},-\overline{\delta_1(m_1)}\ast\overline{m}_2)$
and
$(\overline{\delta_2(m_2)}\ast\overline{m}_1,-\overline{m}_2\ast\overline{\delta_1(m_1)})$,
for all $m_1\in\mathfrak{m}_1$, $m_2\in\mathfrak{m}_2$. Set
$B=((\overline{\mathfrak{m}}_1\ast\overline{\mathfrak{g}}_2)
\oplus(\overline{\mathfrak{g}}_1\ast\overline{\mathfrak{m}}_2))/\mathfrak{a}$.
Then

$(i)$ $\mathfrak{m}=\mathfrak{m}_1\dotplus\mathfrak{m}_2\dotplus
B$ is a Leibniz algebra endowed with the Leibniz multiplication
defined by\vspace{-.15cm}
\[[(m_1,m_2,z),(m_1',m_2',z')]=([m_1,m_1'],[m_2,m_2'],
(\overline{m}_1'\ast\overline{\delta_2(m_2)},-\overline{\delta_1(m_1)}
\ast\overline{m}_2')+\mathfrak{a})\vspace{-.15cm}\] for all
$m_i,m_i'\in\mathfrak{m}_i$, $i=1,2$, and $z,z'\in B$.

$(ii)$ There is an action of $\g:=\g_1\oplus\g_2$ on
$\mathfrak{m}$ defined by\vspace{-.15cm}
\[~^{(x_1,x_2)}(m_1,m_2,z)=(\hspace{-.1cm}~^{x_1}m_1,\hspace{-.1cm}~^{x_2}m_2,
(\overline{m}_x\ast\overline{x}_2,-\overline{x}_1
\ast\overline{m}_2)+\mathfrak{a}),\vspace{-.15cm}\]
\[\hspace{-.5cm}(m_1,m_2,z)^{(x_1,x_2)}=(m_1^{x_1}, m_2^{x_2},
(\overline{x}_2\ast\overline{m}_1,-\overline{m}_2
\ast\overline{x}_1)+\mathfrak{a})\vspace{-.15cm},\] for all
$m_i\in\mathfrak{m_i}$, $x_i\in\g_i$ $(i=1,2)$ and $z\in B$.

$(iii)$ The homomorphism $\delta:\mathfrak{m}\lo\g$,
$(m_1,m_2,z)\longmapsto(\delta_1(m_1),\delta_2(m_2))$, together
with the action given in Part $(ii)$ is a Leibniz crossed module.

$($Here
$\overline{\mathfrak{m}}_i=\mathfrak{m_i}/[\g_i,\mathfrak{m_i}],
\overline{\g_i}=\g_i/\g_i^2$, $i=1,2$, and the symbol $\dotplus$
denotes a direct sum of the underlying vector space structure.$)$
\end{lemma}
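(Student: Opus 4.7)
The plan is to verify each of the three claims by direct coordinate-wise computation, relying on three structural observations that emerge immediately from the setup. First, since $\g_1$ and $\g_2$ act trivially on one another inside $\g = \g_1 \oplus \g_2$, the Leibniz algebras $\overline{\m}_1 \ast \overline{\g}_2$ and $\overline{\g}_1 \ast \overline{\m}_2$ are abelian by Lemma 2.1$(i)$; consequently $\mathfrak{a}$ is an ordinary vector subspace and $B$ is an abelian Leibniz algebra. Second, the third-coordinate output of both the multiplication and the action formulas depends only on the $\m_1, \m_2$ parts of the inputs, so $B$ will automatically sit as a central ideal of $\m$. Third, whenever an expression of the form $\overline{[m_i, m_i']}$ or $\overline{\delta_i([m_i, m_i'])}$ appears, the crossed module identities $[m_i, m_i'] = {}^{\delta_i(m_i)} m_i' \in [\g_i, \m_i]$ and $\delta_i([m_i, m_i']) \in \g_i^2$ force it to vanish in $\overline{\m}_i$ and $\overline{\g}_i$ respectively; this collapse will be the principal computational mechanism throughout.

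For part $(i)$, I would confirm bilinearity of the given formula and then check the right Leibniz identity $[x,[y,w]] = [[x,y],w] - [[x,w],y]$ slot by slot. On the $\m_1$ and $\m_2$ slots it reduces to the Leibniz identity in $\m_i$, and on $B$ all three brackets produce zero by the collapse mechanism (the inner-bracket contributions $\overline{[m_i, m_i']}$ vanish in $\overline{\m}_i$, while the outer-bracket contributions $\overline{\delta_i([m_i, m_i'])}$ vanish in $\overline{\g}_i$). Part $(ii)$ proceeds the same way: well-definedness of the action requires the third-coordinate formula to descend modulo $[\g_i, \m_i]$, $\g_i^2$, and $\mathfrak{a}$, all of which follow from the same collapse; each of the six action axioms of Section 2 then reduces on the first two coordinates to the corresponding action axioms for $\g_i$ on $\m_i$, while on the third coordinate both sides vanish modulo $\mathfrak{a}$ after the same collapse is applied.

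For part $(iii)$, the equivariance identities $\delta({}^x m) = [x, \delta(m)]$ and $\delta(m^x) = [\delta(m), x]$ are immediate, since $\delta$ annihilates the $B$-component and the identities hold separately for each $\delta_i$. The main obstacle is the pair of Peiffer identities ${}^{\delta(m)} m' = [m, m'] = m^{\delta(m')}$. Substituting $x_i = \delta_i(m_i)$ into the action formula, the first two coordinates of both ${}^{\delta(m)} m'$ and $m^{\delta(m')}$ reproduce $[m_i, m_i']$ via the Peiffer identities for $\delta_i$, and the third coordinate of ${}^{\delta(m)} m'$ agrees with that of $[m, m']$ on the nose by the very shape of the multiplication formula. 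The nontrivial step is showing $m^{\delta(m')} = [m, m']$ on the third coordinate, which requires rewriting the discrepancy $(\overline{\delta_2(m_2')} \ast \bar m_1 - \bar m_1' \ast \overline{\delta_2(m_2)},\, -\bar m_2 \ast \overline{\delta_1(m_1')} + \overline{\delta_1(m_1)} \ast \bar m_2')$ as a combination of the two specified generators of $\mathfrak{a}$ evaluated at appropriate arguments. This matching is precisely the role of $\mathfrak{a}$ in the construction, and is the step I expect to require the most care.
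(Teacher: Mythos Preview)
Your proposal takes the same direct, coordinate-wise verification approach as the paper, but with considerably more detail: the paper declares parts $(i)$ and $(ii)$ ``straightforward'' and, for part $(iii)$, explicitly checks only the equivariance identity $\delta({}^{x}m)=[x,\delta(m)]$, leaving the remaining crossed-module axioms (including the Peiffer identities you flag) to the reader. Your identification of the second Peiffer identity $m^{\delta(m')}=[m,m']$ as the one place where the quotient by $\mathfrak{a}$ is genuinely needed is accurate and goes beyond what the paper makes explicit.
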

\begin{proof}
Parts $(i)$ and $(ii)$ follow straightforwardly. To prove the
final part of the lemma, we first show that
$\delta(^xm)=[x,\delta(m)]$ for all $m\in\mathfrak{m}$, $x\in\g$.
Suppose $m=(m_1,m_2,z)$ and $x=(x_1,x_2)$ for some
$m_i\in\mathfrak{m_i}$, $x_i\in\g_i$ $(i=1,2)$ and $z\in B$. Then
we have\vspace{-.15cm}
\begin{alignat*}{1}
\delta(^{(x_1,x_2)}(m_1,m_2,z))&=\delta(\hspace{-.1cm}~^{x_1}m_1,\hspace{-.1cm}~^{x_2}m_2,
(\overline{m}_1\ast\overline{x}_2,-\overline{x}_1\ast\overline{m}_2)+\mathfrak{a})=
(\delta_1(\hspace{-.1cm}~^{x_1}m_1),\delta_2(\hspace{-.1cm}~^{x_2}m_2))\\
&=([x_1,\delta_1(m_1)],[x_2,\delta_2(m_2)])=[(x_1,x_2),\delta(m_1,m_2,z)].
\end{alignat*}
Analogously, one can check the accuracy of other relations and the
proof is complete.
\end{proof}
\begin{theorem}\label{theo3.2}
Let $(\g_i,\n_i)$ be a pair of Leibniz algebras and
$\delta_i:\mathfrak{m_i}\lo\mathfrak{g_i}$ be a relative stem
cover of $(\g_i,\n_i)$ for $i=1,2$. Then the Leibniz crossed
module $\delta:\mathfrak{m}\lo\mathfrak{g}$ obtained in Lemma
$\ref{lem3.1}$ is a relative stem cover of the pair
$(\g_1\oplus\g_2,\n_1\oplus\n_2)$.
\end{theorem}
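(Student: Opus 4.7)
The plan is to verify the three defining conditions for $\delta:\mathfrak{m}\lo\g$ (with $\g:=\g_1\oplus\g_2$) to be a relative stem cover of $(\g_1\oplus\g_2,\n_1\oplus\n_2)$: namely (a) $\delta(\mathfrak{m})=\n_1\oplus\n_2$, (b) $\ker\delta\cong HL_2(\g_1\oplus\g_2,\n_1\oplus\n_2)$, and (c) $\ker\delta\subseteq Z(\g,\mathfrak{m})\cap[\g,\mathfrak{m}]$. Condition (a) is immediate from the formula $\delta(m_1,m_2,z)=(\delta_1(m_1),\delta_2(m_2))$ of Lemma \ref{lem3.1}(iii) together with the hypothesis $\delta_i(\mathfrak{m}_i)=\n_i$.

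For condition (b), first observe that as a vector space $\ker\delta=\ker\delta_1\dotplus\ker\delta_2\dotplus B$. The key preliminary step is to show that each $\delta_i$ induces an isomorphism $\overline{\mathfrak{m}}_i=\mathfrak{m}_i/[\g_i,\mathfrak{m}_i]\stackrel{\cong}{\lo}\n_i/[\g_i,\n_i]=\overline{\n_i}$: surjectivity is clear from $\delta_i(\mathfrak{m}_i)=\n_i$ and the crossed-module relation $\delta_i(\hspace{-.05cm}~^x m)=[x,\delta_i(m)]$, while injectivity follows from the stem-cover hypothesis $\ker\delta_i\subseteq[\g_i,\mathfrak{m}_i]$. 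Under these identifications, $B$ coincides naturally with the cross term of the K\"unneth-style Corollary \ref{coro3.3} applied to $(\g_1,\n_1)$ and $(\g_2,\n_2)$, and the ideal $\mathfrak{a}$ on the $\mathfrak{m}$-side is carried onto its analogue on the $\n$-side. Combining with the isomorphisms $\ker\delta_i\cong HL_2(\g_i,\n_i)$ and invoking Corollary \ref{coro3.3} gives $\ker\delta\cong HL_2(\g_1\oplus\g_2,\n_1\oplus\n_2)$.

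Condition (c) splits into two inclusions. For $\ker\delta\subseteq Z(\g,\mathfrak{m})$, the summands $\ker\delta_i\subseteq Z(\g_i,\mathfrak{m}_i)$ are annihilated by $\g_i$ by hypothesis; since moreover $\ker\delta_i\subseteq[\g_i,\mathfrak{m}_i]$ makes the image of any such element vanish in $\overline{\mathfrak{m}}_i$, the cross-action formulas in Lemma \ref{lem3.1}(ii) show they are also annihilated by $\g_{3-i}$. The summand $B$ sits in the third coordinate, and inspection of the action formulas shows that any action of $(x_1,x_2)$ on a pure $B$-element produces zero in all three components. For $\ker\delta\subseteq[\g,\mathfrak{m}]$, the containments $\ker\delta_i\subseteq[\g_i,\mathfrak{m}_i]\subseteq[\g,\mathfrak{m}]$ handle the first two summands, while each generator of $B$ arises as a $\g$-action on $\mathfrak{m}$; for instance $(\overline{m}_1\ast\overline{x}_2,0)+\mathfrak{a}$ appears as the third coordinate of $\hspace{-.1cm}~^{(0,x_2)}(m_1,0,0)$, and the other three generator types arise symmetrically.

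The principal obstacle is the identification in (b): matching the cross term $B$, defined in terms of $\overline{\mathfrak{m}}_i$, with the Corollary \ref{coro3.3} cross term, defined in terms of $\overline{\n}_i$. This is precisely the place where the stem-cover condition $\ker\delta_i\subseteq[\g_i,\mathfrak{m}_i]$ is essential; once that isomorphism is in hand, everything else reduces to routine verification using the Leibniz action axioms and the explicit formulas in Lemma \ref{lem3.1}.
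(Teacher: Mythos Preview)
Your proposal is correct and follows essentially the same route as the paper's proof: verify the image, identify $\ker\delta=\ker\delta_1\dotplus\ker\delta_2\dotplus B$, match the three summands against Corollary~\ref{coro3.3}, and then check the centrality and $[\g,\mathfrak{m}]$ inclusions by direct inspection of the action formulas in Lemma~\ref{lem3.1}. If anything, you are slightly more explicit than the paper in one place: the paper silently identifies $B$ (built from $\overline{\mathfrak{m}}_i$) with the cross term of Corollary~\ref{coro3.3} (built from $\overline{\n}_i$), whereas you isolate the isomorphism $\overline{\mathfrak{m}}_i\cong\overline{\n}_i$ induced by $\delta_i$ and point out that its injectivity is exactly where the stem-cover hypothesis $\ker\delta_i\subseteq[\g_i,\mathfrak{m}_i]$ enters.
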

\begin{proof}
An easy verification shows that $\ker\delta=\ker\delta_1
\dotplus\ker\delta_2\dotplus B$ and Im$\delta=\n_1\oplus\n_2$. As
the actions of $\g_1$ on $\overline{\mathfrak{m}}_1$ and
$\overline{\g}_i$, $i=1,2$, are trivial, it follows that $\g_1$
acts trivially on $B$. Since a similar result holds for $\g_2$, we
deduce that $\ker\delta_1$, $\ker\delta_2$ and $B$ act trivially
on each other, whence
$\ker\delta=\ker\delta_1\oplus\ker\delta_2\oplus B$. Consequently,
it can be inferred from Corollary 3.3 that $\ker\delta\cong
HL_2(\g_1,\n_1)\oplus HL_2(\g_2,\n_2)\oplus B \cong
HL_2(\g_1\oplus\g_2,\n_1\oplus\n_2)$. It remains to prove that
$\ker\delta\subseteq Z(\g,\mathfrak{m}) \cap[\g,\mathfrak{m}]$.
Since $\ker\delta_i\subseteq Z(\g_i,\mathfrak{m_i})
\cap[\g_i,\mathfrak{m_i}]$ for $i=1,2$, the action of $\g$ on
$\ker\delta$ is trivial and, moreover, any element of $\ker\delta$
may be expressed as a finite linear combination of elements of the
form $(y_1,y_2,z)$, where $y_1=\hspace{-.1cm}~^{x_1}m_1$ or
$m_1^{x_1}$, $y_2=\hspace{-.1cm}~^{x_2}m_2$ or $m_2^{x_2}$, and
$z=(\bar m_1'\ast\bar x_2',\bar x_1'\ast\bar m_2')+\mathfrak{a}$,
$(\bar m_1'\ast\bar x_2',\bar m_2'\ast\bar x_1')+\mathfrak{a}$,
$(\bar x_2'\ast\bar m_1',\bar x_1'\ast\bar m_2')+\mathfrak{a}$ or
$(\bar x_2'\ast\bar m_1',\bar m_2'\ast\bar x_1')+\mathfrak{a}$. It
is seen that $(y_1,y_2,z)\in[\g,\mathfrak{m}]$ for all possible
cases of $y_1, y_2$ and $z$. For instance, we have\vspace{-.15cm}
\begin{alignat*}{1}
(\hspace{-.1cm}~^{x_1}m_1,m_2^{x_2},(\bar x_2'\ast\bar m_1',\bar
m_2'\ast\bar x_1')+\mathfrak{a})
&=~^{(x_1,0)}(m_1,0,0)+(0,m_2,0)^{(0,x_2)}+(m_1',0,0)^{(0,x_2')}\\
&+(0,m_2',0)^{(-x_1',0)}\in[\g,\mathfrak{m}].
\end{alignat*}
The proof is now complete.
\end{proof}
The following corollary is an immediate result of the above
theorem. This result is similar to the works of Wiegold
\cite{Wiegold}, and Salemkar and Edalatzadeh \cite{salemkar-ed} in
the cases of groups and of Lie algebras.
\begin{corollary}\label{coro3.3}
Let $\g_1$, $\g_2$ be any Leibniz algebras and $\mathfrak{m_1}$,
$\mathfrak{m_2}$ any covers of $\g_1$, $\g_2$, respectively. Then
$\mathfrak{m}=\mathfrak{m}_1\dotplus\mathfrak{m}_2\dotplus\mathfrak{m}_1^{ab}\ast\mathfrak{m}_2^{ab}$
with the Leibniz multiplication defined by\vspace{-.15cm}
\[[(m_1,m_2,z),(m_1',m_2',z')]=([m_1,m_1'],[m_2,m_2'],\bar m_1'
\ast\bar m_2-\bar m_1\ast\bar m_2')\vspace{-.15cm}\] for all
$m_i,m_i'\in\mathfrak{m}_i$, $i=1,2$, and
$z,z'\in\mathfrak{m}_1^{ab}\ast\mathfrak{m}_2^{ab}$, is a cover of
$\g_1\oplus\g_2$.
\end{corollary}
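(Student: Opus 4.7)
The plan is to specialize Theorem~\ref{theo3.2} to the case $\n_i=\g_i$, in which the notion of relative stem cover of $(\g_i,\g_i)$ collapses to that of an ordinary cover of $\g_i$. Applied in this setting, Theorem~\ref{theo3.2} will produce a cover of $\g_1\oplus\g_2$ built from the data in Lemma~\ref{lem3.1}; the real work is just rewriting that data in the simplified form stated in the corollary.

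First I would translate the ``bars'' appearing in Lemma~\ref{lem3.1}. Because $\delta_i:\mathfrak{m}_i\to\g_i$ is a crossed module with $\mathrm{Im}(\delta_i)=\g_i$, the induced action satisfies $^{x_i}m=[\tilde x_i,m]$ and $m^{x_i}=[m,\tilde x_i]$ for any pre-image $\tilde x_i$ of $x_i$. Hence $[\g_i,\mathfrak{m}_i]=\mathfrak{m}_i^2$, so $\overline{\mathfrak{m}}_i=\mathfrak{m}_i^{ab}$. Moreover, the cover condition $\ker\delta_i\subseteq[\g_i,\mathfrak{m}_i]=\mathfrak{m}_i^2$ means that $\delta_i$ descends to an isomorphism $\mathfrak{m}_i^{ab}\stackrel{\cong}{\lo}\g_i^{ab}=\overline{\g}_i$.

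Next I would identify the ideal $\mathfrak{a}$ and the quotient $B$ of Lemma~\ref{lem3.1}. Transporting the two families of generators of $\mathfrak{a}$ across the isomorphisms $\overline{\mathfrak{m}}_i\cong\overline{\g}_i$, they take the diagonal form $(a,-a)$ with $a\in\mathfrak{m}_1^{ab}\ast\mathfrak{m}_2^{ab}$, exactly as in the proof of Corollary~\ref{coro2.6}. Consequently $B\cong\mathfrak{m}_1^{ab}\ast\mathfrak{m}_2^{ab}$, and the bracket formula of Lemma~\ref{lem3.1}(i) pushes forward to $([m_1,m_1'],[m_2,m_2'],\bar m_1'\ast\bar m_2-\bar m_1\ast\bar m_2')$, the minus sign recording the collapse forced by $\mathfrak{a}$. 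This is precisely the Leibniz structure displayed in the statement.

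The only genuine obstacle is the bookkeeping: verifying that $\delta_i$ really induces an isomorphism on abelianizations (so that the two summands $\overline{\mathfrak{m}}_1\ast\overline{\g}_2$ and $\overline{\g}_1\ast\overline{\mathfrak{m}}_2$ are canonically identified with $\mathfrak{m}_1^{ab}\ast\mathfrak{m}_2^{ab}$) and tracking signs through the identification of $B$. Once this is in place, the corollary is just Theorem~\ref{theo3.2} written in the language of (absolute) covers, and the cover axioms for $\delta:\mathfrak{m}\lo\g_1\oplus\g_2$ follow directly from those established there.
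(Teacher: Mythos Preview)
Your proposal is correct and is exactly the intended derivation: the paper states the corollary as ``an immediate result of the above theorem'' without writing out a proof, and what you have done is spell out that immediacy --- specialize Theorem~\ref{theo3.2} to $\n_i=\g_i$, use surjectivity of $\delta_i$ and the crossed-module identities to get $\overline{\mathfrak m}_i=\mathfrak m_i^{ab}\cong\overline{\g}_i$, and then collapse $B$ to $\mathfrak m_1^{ab}\ast\mathfrak m_2^{ab}$ via the anti-diagonal, just as in Corollary~\ref{coro2.6}. There is no gap.
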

Theorem 4.5 may be useful to construct the relative stem covers of
some pairs of Leibniz algebras.
\begin{example}
$(i)$ With the assumptions of Theorem \ref{theo3.2}, if the
Leibniz algebras $\g_1$ and $\g_2$ are perfect, then
$\delta:\m_1\oplus\m_2\lo\g_1\oplus\g_2$ is a relative stem cover
of the pair $(\g_1\oplus\g_2,\n_1\oplus\n_2)$.

$(ii)$ Let $\g$ be a Leibniz algebra with ideals $\n$ and
$\mathfrak{u}$ such that $\g=\n\oplus\mathfrak{u}$ and let
$\delta_1:\m\lo\n$ be a relative stem cover of the pair $(\n,\n)$.
Since the map $\delta_2:0\lo\mathfrak{u}$ is a relative stem cover
of the pair $(\mathfrak{u},0)$, Theorem \ref{theo3.2} yields that
the map $\delta:\m\dotplus(\m^{ab}\ast\mathfrak{u}^{ab})\lo\g$,
$(m,z)\longmapsto\delta_1(m)$, is a relative stem cover of
$(\g,\n)$.

$(iii)$ Let $(\g,\n)$ be a pair of finite dimensional abelian
Leibniz algebras. It is easy to see that
$\delta_1:\n\dotplus(\n\curlywedge\n)\lo\n$, $(n,x)\longmapsto n$,
is a relative stem cover of the pair $(\n,\n)$, where $\n$ acts on
$\n\dotplus(\n\curlywedge\n)$ by $~^{n_1}(n,x)=(0,n_1\curlywedge
n)$ and $(n,x)^{n_1}=(0,n\curlywedge n_1)$, see \cite[Example
3.5]{edalatzadeh2}. So, by Part $(ii)$,\vspace{-.15cm}
\[\delta:\n\dotplus(\n\curlywedge\n)\dotplus(\n\ast\f{\g}{\n})\lo\g,~~~~~~
(n,x,y)\longmapsto n\vspace{-.15cm}\] is a relative stem cover of
$(\g,\n)$. But
$\dim((\n\curlywedge\n)\dotplus(\n\ast(\g/\n)))=(\dim(\n))^2+2
\dim(\n)\dim(\g/\n)$ is equal to $\dim(\g\curlywedge\n)$, thanks
to Corollary \ref{coro2.4}$(i)$. It therefore follows that
$(\n\curlywedge\n)\dotplus(\n\ast(\g/\n))\cong\g\curlywedge\n$ and
$\delta$ is a relative stem cover from
$\n\dotplus(\g\curlywedge\n)$ to $\g$ for the pair $(\g,\n)$.
\end{example}

\end{document}